\documentclass[]{amsart}
\usepackage{amssymb,amsmath,epsfig,graphics,latexsym,psfrag}
\usepackage[english]{babel}
\usepackage[all]{xy}
\usepackage{amscd, amssymb, amsmath, amsthm, graphics}
\usepackage{amsmath,amsfonts,amsthm,amssymb}
\usepackage{latexsym,amsmath}
\usepackage{graphicx,psfrag}
\usepackage{mathrsfs,stmaryrd}
\usepackage[final]{pdfpages}

\usepackage{hyperref}

\newtheorem{theorem}{Theorem}[section]
\newtheorem{corollary}[theorem]{Corollary}
\newtheorem{lemma}[theorem]{Lemma}

\newtheorem{proposition}[theorem]{Proposition}

\newtheorem{remark}[theorem]{Remark}

\theoremstyle{definition}

\newtheorem{convention}[theorem]{Convention}
\newtheorem{notation}[theorem]{Notation}

\begin{document}

\title[On the fiberedness of surgery 3-manifolds]
{On the fiberedness of surgery 3-manifolds}

\author{Yi Ni}
\address{Department of Mathematics, 
California Institute of Technology, Pasadena, CA 91125, USA}
\email{yini@caltech.edu}

\author{Zhongzi Wang}
\address{School of Mathematical Sciences, Peking University, Beijing 100871, People's Republic of China}
\email{wangzz22@stu.pku.edu.cn}

\date{\today}
\subjclass[2020]{57M99}
\keywords{Dehn  surgery (filling), surface fibration, geometric decomposition, Thurston norm}

\begin{abstract}
Let $M$ be a closed orientable 3-manifold  and $k$ be a knot in $M$.   Then the Dehn surgery of $M$ along $k$ with slope $\alpha$ is not surface fibered  for all but a sparse set of slopes. 
\end{abstract}

\maketitle

\tableofcontents


\section{Introduction}

Both Dehn surgery  (filling)
of 3-manifolds and (surface) fibration of 3-manifolds are topics of primary interests in 3-manifold topology.
Let us recall those notions of 3-manifold theory. We assume that all $3$-manifolds are orientable.

A compact 3-manifold $M$ is {\it fibered}, if $M$ is fibered by a compact surface $F$, that is, $M$ is an $F$-bundle over the circle $S^1$. We can also view $M$ as the mapping torus of a homeomorphism $\phi:F\to F$, denoted by $\mathcal M(F,\phi)$.
 If a compact 3-manifold $M$ is fibered by a compact surface $F$, then $M$ is prime. Furthermore, if $F$ is not a $2$-sphere, then $M$ is irreducible and the fiber $F$ is a proper incompressible surface.


Suppose that $M$ is a 3-manifold with $T$ being a torus component  of $\partial M$, and $\alpha$ is a slope in $T$. 
The  3-manifold $M(\alpha)$ is obtained 
from $M$ by 
capping off $T$ with a solid torus $S^1\times D^2$ so that its meridian 
is glued to $\alpha$. We have $$M(\alpha)=M\cup_{\alpha=\partial D^2}S^1\times D^2.$$ We call $M(\alpha)$ the {\it Dehn filling} of $M$ on $T$ with slope $\alpha$.

Let $X$ be a $3$-manifold and $k$ be a knot in $X$, $N(k)$ be a tubular neighborhood of $k$ in $X$,  $T=\partial N(k)$, and $\alpha$ be a slope on $T$.
Then \[X(k, \alpha)=(X\setminus N(k))\cup_{\alpha=\partial D^2}S^1\times D^2,\] the Dehn filling of $X\setminus N(k)$ on $T$ with slope $\alpha$, is called the {\it Dehn surgery} of $X$ along $k$ with slope 
$\alpha$.

Now assume that $M$ is an irreducible $3$-manifold with $T$ being a torus component of $\partial M$, and $\alpha$ is a slope on $T$.
Let $A_T$ be the set of all slopes on $T$. Let \[\mathbb P(H_1(T;\mathbb Q))\cong \mathbb Q\mathbb P^1\cong\mathbb Q \cup \{\infty\}\] be the projectivization of $H_1(T;\mathbb Q)$, then
there is a natural bijection \[\rho: A_T\to \mathbb P(H_1(T;\mathbb Q)),\] 
and hence a natural embedding
$$\rho: A_T\to \mathbb P(H_1(T;\mathbb Q))\subset \mathbb P(H_1(T;\mathbb R))\cong \mathbb R\mathbb P^1.$$

Given two slopes $\alpha,\beta$, let $\Delta(\alpha,\beta)$ be the minimal number of intersection points between a simple closed curve with slope $\alpha$ and a simple closed curve with slope $\beta$, called the {\it distance} between $\alpha$ and $\beta$.

In order to state our main result, we introduce the following concept: A subset $S \subset A_T$ is {\it sparse}, if it has at most finitely many accumulation points in $\mathbb P(H_1(T;\mathbb R))$. 
Let $(P)$ be a property on $3$-manifolds. We say $M(\alpha)$ has property $(P)$ for all but a sparse set of slopes if there is a sparse set $S\subset A_T$ such that for all $\alpha\in A_T\setminus S$, $ M(\alpha)$ has property $(P)$.


\begin{theorem} \label{main1} Suppose that $M$ is an orientable irreducible $3$-manifold with $\partial M$  an  incompressible torus $T$.
Then the set $\{\alpha\in A_T| M(\alpha)\, \text{ is fibered}\}$ is sparse. Equivalently speaking, for all but a sparse set of slopes, $M(\alpha)$ is non-fibered. 
\end{theorem}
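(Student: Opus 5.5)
The plan is a homological reduction, then Gabai's Thurston-norm results, then an analysis of which knots in a product $F\times I$ admit infinitely many product-preserving surgeries. Let $\lambda$ be the rational longitude of $T$, i.e. the unique slope that is rationally null-homologous in $M$. By the ``half lives, half dies'' principle, the image of $\partial\colon H_2(M,\partial M;\mathbb Q)\to H_1(T;\mathbb Q)$ is the line spanned by $\lambda$. For $\alpha\neq\lambda$ one then gets $b_1(M(\alpha))=b_1(M)-1$. A closed fibered $3$-manifold has $b_1\ge 1$, so if $b_1(M)=1$ only $\alpha=\lambda$ can give a fibered filling, and the set is finite. We may therefore assume $b_1(M)\ge 2$ and discard the single slope $\lambda$.

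Next I would apply Gabai's theorem from Foliations~II/III. Since $M$ is irreducible with incompressible torus boundary, for all slopes $\alpha$ except at most one, a norm-minimizing surface in $M$ remains norm-minimizing in $M(\alpha)$, so $x_{M(\alpha)}$ agrees with $x_M$ on the image of $H_2(M)\to H_2(M(\alpha))$. Fix $\alpha\neq\lambda$ outside this exceptional slope with $M(\alpha)$ fibered, with fiber $F$. Every class of $H_2(M(\alpha))$ has boundary zero on $T$ after intersecting with $M$, because a boundary multiple of $\alpha$ is not a multiple of $\lambda$. Hence $[F]$ comes from a closed class in $H_2(M)$. By the norm equality, $[F]$ is represented by a norm-minimizing closed surface $S\subset M$ with $x(S)=x(F)$. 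Since fibers are the unique norm minimizers in their class up to isotopy, $S$ is isotopic in $M(\alpha)$ to a fiber disjoint from the core $k_\alpha$. Cutting along $S$, the core $k_\alpha$ becomes a knot in $F\times I$, and $M\setminus N(S)=(F\times I)\setminus N(k_\alpha)$ is a knot exterior whose $\alpha$-filling is $F\times I$.

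Now apply Gabai's theorem on surgery in $F\times I$. If $(F\times I)\setminus N(k)$ is irreducible and the surgery yields $F\times I$, then either $\alpha$ is the unique exceptional slope, or $k$ is isotopic into a level surface $F\times\{t\}$ and $\alpha$ is of the form $\mu+n\beta$, where $\beta$ is the surface framing. The second case is exactly the Dehn twist family $1/n$. These slopes accumulate only at $\beta$, which explains why the conclusion is ``sparse'' rather than ``finite''. Thus every fibered slope other than finitely many is of the form $\mu+n\beta_S$ for a closed surface $S\subset M$ that cuts $M$ into a product-minus-a-knot, where the knot is isotopic into a level. Each such $S$ contributes a single accumulation point $\beta_S$, namely the slope on $T$ cut out by the annulus joining $T$ to $S$.

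The main obstacle is showing that only finitely many framing slopes $\beta_S$, and only finitely many isolated exceptional slopes, occur as $\alpha$ varies. A priori the surfaces $S$ range over infinitely many classes in $H_2(M)$. My first attempt would use the JSJ decomposition of $M$. A surface $S$ as above, together with the annulus from $T$ to $S$, yields an essential torus or annulus structure adjacent to $T$. By finiteness of the characteristic submanifold, such annuli, and hence the slopes $\beta_S$, lie in finitely many isotopy classes, namely the fiber slopes of Seifert pieces meeting $T$ or cabling slopes. The exceptional isolated slopes would be controlled in two ways. First, if the piece of $M$ containing $T$ is hyperbolic, Thurston's hyperbolic Dehn surgery theorem makes the core a short geodesic for large slopes, which is incompatible with $k_\alpha$ lying in a level surface. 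Second, there are only finitely many fibered faces of $x_M$, each of which contributes at most one Gabai-exceptional slope.
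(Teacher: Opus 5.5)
\textbf{There is a genuine gap, and it sits exactly at what you call the main obstacle.} It is not a side issue. The product-surgery theorem says nothing about a single slope, because the $\alpha$-filling of $(F\times I)\setminus N(k_\alpha)$ is just the trivial filling. It applies only once you know that the \emph{same} surface $S\subset M$ is a fiber of $M(\alpha)$ for two or more different slopes. Nothing in your argument produces such a common surface, or bounds the number of surfaces $S$ that arise, and hence the number of framings $\beta_S$ and isolated slopes.

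Your two proposed remedies do not work:
\begin{itemize}
\item The annulus from $T$ to $S$ is properly embedded only in $M\bbslash S$, not in $M$, so the characteristic submanifold of $M$ says nothing about it. When the piece $Y$ containing $T$ is hyperbolic there are no essential annuli at all, yet Dehn-twist families do occur there.
\item The short-geodesic argument is false. Take a curve $c$ in a fiber of a fibered $M(\alpha_0)$ whose exterior $M$ is hyperbolic, with $\beta$ the fiber framing of $c$. The fillings in $S(\beta,1)$ are mapping tori whose monodromy is composed with powers $T_c^n$ of the Dehn twist. For large $|n|$ they are hyperbolic, and their core is a short geodesic isotopic to $c$, so it lies in a fiber.
\end{itemize}

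\textbf{The missing idea is to fix one canonical surface per face of the Thurston norm ball.} This is how the paper proceeds inside the hyperbolic piece $Y$.
\begin{itemize}
\item Your version of Gabai's theorem is misstated: it gives at most one exceptional slope \emph{for each fixed} norm-minimizing surface, not uniformly in the surface. The paper applies it to the rational barycenters $v_1,\dots,v_m$ of the open faces of $B_{Y\setminus T}$. Convexity then finishes: a convex function bounded by $1$ on a face and equal to $1$ at an interior point is identically $1$. So $(i_\alpha)_*$ is an isometric embedding for all but $m$ slopes.
\item Next, if $i_\alpha(x)$ is a fiber class, then $x/th_{Y\setminus T}(x)$ lies in an open face $P_i$ of $B_{Y\setminus T}$, and $i_\alpha(P_i)$ lies in a single open top-dimensional face of $B_{Y(\alpha)}$. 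By Thurston's fibered-face theorem, $i_\alpha(x)$ is a rational fiber class if and only if $i_\alpha(v_i)$ is.
\item Hence the taut surface $F_i\subset Y$ representing a multiple of $v_i$ is a fiber of $Y(\alpha)$ for every $\alpha$ in $S_i'=\{\alpha: i_\alpha(v_i)\text{ is a rational fiber class}\}$. Note that the uniqueness you need is uniqueness among \emph{taut} representatives, not merely norm-minimizing ones.
\item Ni's theorem, applied to the fixed product $Y\bbslash F_i$ with the slope varying, gives either $|S_i'|\le 2$ or $S_i'=S(\beta_i,1)$.
\end{itemize}
Since there are only $m$ faces, this yields finitely many framings and finitely many isolated slopes. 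Your closing remark about ``finitely many fibered faces'' points this way, but it is the constancy of fiberedness along a face, and the resulting single surface per face, that makes the count finite.

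The paper handles a Seifert piece $Y$ separately and more simply. The regular fiber $f_0$ becomes rationally null-homologous in $M(\alpha)$, whereas a Seifert piece fibered by $F$ has $[F]\cdot[f_0]\neq 0$.
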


 Let $Y$ be the JSJ piece of $M$  containing $T=\partial M$.
Then $Y$ is either  a hyperbolic piece or a Seifert piece, see Theorem \ref{Thurston}.



The  proof of  Theorem \ref{main1} when $Y$ is a hyperbolic piece (Proposition \ref{hyperbolic}) is subtle. The main route is: By applying the theory of Thurston norm \cite{Th2}, 
 Gabai and Lackenby's results about the invariance of Thurston norm after Dehn filling \cite{Ga,La}, and some  observations in 3-manifold topology and convex geometry,
the proof of  Theorem~\ref{main1} when $Y$ is a hyperbolic piece is reduced to the first author's result about  Dehn surgery on knots in the product manifolds
\cite{Ni}. More detailed outline 
of the proof is given after Proposition \ref{prop:FiniteSlopes}.

The proof of  Theorem~\ref{main1} when  $Y$ is  a Seifert piece (Proposition~\ref{Seifert}) is rather direct. 

   
\begin{corollary}\label{surgery}
Let $X$ be a closed orientable 3-manifold  and $k$ be a knot in $X$.   Then $X(k, \alpha)$ is non-fibered for all but a sparse set of slopes. 
\end{corollary}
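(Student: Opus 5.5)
We derive Corollary~\ref{surgery} from Theorem~\ref{main1} by splitting into cases according to the knot exterior $E=X\setminus N(k)$, with $T=\partial N(k)$, so that $X(k,\alpha)=E(\alpha)$ for every $\alpha\in A_T$. Since sparseness is preserved under finite unions and under removing finitely many slopes, it suffices in each case to show that the set of fibered fillings is sparse.

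\emph{Case 1: $E$ is irreducible and $T$ is incompressible in $E$.} Then $E$ satisfies exactly the hypotheses of Theorem~\ref{main1}, and the conclusion follows directly.

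\emph{Case 2: $E$ is reducible.} Take a reducing sphere $S\subset E$. Viewed in $X$, $S$ separates, and $k$ lies on one side of it. Write $X = X_1\# X_2$, where $X_2$ contains $k$ and $X_1$ is not $S^3$ (this is what makes $S$ essential in $E$). Then $X(k,\alpha)=X_1\# X_2(k,\alpha)$. A fibered manifold is prime, and it is $S^2\times S^1$ only when the fiber is a sphere. So whenever $X_2(k,\alpha)\neq S^3$, the manifold $X(k,\alpha)$ is a nontrivial connected sum and hence not fibered. If $X_2(k,\alpha)=S^3$, then $X(k,\alpha)\cong X_1$ is fixed. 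To handle this, we count the slopes $\alpha$ for which a fixed knot has a surgery yielding $S^3$. We can instead reduce by induction on the number of prime summands: let $E'$ be the exterior of $k$ in $X_2$. Either $E'$ is again reducible, in which case we split off another summand, or we are in Case~1 or Case~3 for $(X_2,k)$. In Case~1, the slopes with $E'(\alpha)=S^3$ are finitely many (at most two, by Gordon--Luecke and the cyclic surgery theorem; finiteness alone is enough here), and they form a finite, hence sparse, set. Case~3 is treated below. Since the prime decomposition of $X$ is finite, the induction terminates.

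\emph{Case 3: $E$ is irreducible and $T$ is compressible in $E$.} Then $E$ is a solid torus, so $X$ is a lens space, $S^3$, or $S^2\times S^1$, and $k$ is a core of a genus-one Heegaard splitting. In this case $E(\alpha)$ is a lens space (including $S^3$ and $S^2\times S^1$) for every slope $\alpha$. A closed manifold with finite fundamental group is fibered only if the fiber is $S^2$, and the only fibered manifold of this kind is $S^2\times S^1$, which occurs for exactly one slope, namely the meridian of the solid torus $E$. Lens spaces other than $S^2\times S^1$ are not fibered, since a fiber of positive genus would give $b_1>0$, and a torus fiber would give an infinite fundamental group. So the fibered slopes form a single point, which is a sparse set.

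The main obstacle is the bookkeeping in Case~2. Once the reducing spheres are understood, each individual case is straightforward, and the induction over the finitely many prime summands closes the argument.
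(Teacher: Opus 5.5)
Your case split matches the paper's: an irreducible exterior with incompressible boundary goes to Theorem~\ref{main1}, and a solid-torus exterior gives lens spaces. A reducible exterior gives $X(k,\alpha)=X_1\# X_2(k,\alpha)$ with $X_1\neq S^3$, and primeness then forces $X_2(k,\alpha)\cong S^3$.

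\textbf{The gap.} The problem is in finishing the reducible case. You need the set $\{\alpha : X_2(k,\alpha)\cong S^3\}$ to be sparse. When the exterior $E'$ of $k$ in $X_2$ is a solid torus, you write ``Case~3 is treated below''. But Case~3 answers a different question: it shows that a lens-space filling is fibered for only one slope.

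Here the manifold in question is $X(k,\alpha)\cong X_1$, which can certainly be fibered; take $X$ fibered and $k$ a trivial knot in a ball. The relevant slopes are all $\alpha$ with $E'(\alpha)\cong S^3$, that is, all $\alpha$ with $\Delta(\alpha,\beta)=1$, where $\beta$ is the meridian of the solid torus $E'$. This set is infinite, so the ``single point'' conclusion of your Case~3 cannot be carried over. You must identify the set as $S(\beta,1)$ and invoke Remark~\ref{sparse2}(2). This is exactly the paper's case ``$k'$ unknotted, $\alpha=1/n$''. It is also precisely where infinitely many fibered fillings arise in the reducible case.

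\textbf{Smaller issues.}

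\emph{Choice of reducing sphere.} A reducing sphere of $E$ need not separate $X$. For instance, a non-separating sphere of $X$ disjoint from $k$ is a non-separating reducing sphere of $E$. You should choose a separating reducing sphere, which always exists when $E$ is reducible.

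\emph{The induction is unnecessary.} Suppose $X_2(k,\alpha_0)\cong S^3$ for some $\alpha_0$. Then $E'$ is a knot exterior in $S^3$, hence irreducible. So $E'$ is either a solid torus or has incompressible boundary. In the latter case Gordon--Luecke gives at most one such slope, not two. If instead $E'$ is reducible, then $X_2(k,\alpha)$ has a prime summand other than $S^3$ for every $\alpha$, so the set is empty. This is how the paper handles the reducible case in a single step.

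\emph{Case~3 wording.} A closed manifold with finite $\pi_1$ is never fibered, since a fibration over $S^1$ gives a surjection $\pi_1\to\mathbb{Z}$. Your statement that such a manifold is fibered ``only if the fiber is $S^2$'' is therefore inaccurate, although your conclusion in that case is correct.
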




\begin{remark}
Samuel Taylor informed us that there is a strong restriction on many fibered fillings of a hyperbolic cusped $3$-manifold \cite[Theorem~1.5]{LMST}.  
\end{remark}


\vspace{5pt}\noindent{\bf Acknowledgements.}\quad  The first author was
partially supported by NSF grant number DMS-1811900 and a Xianfeng Scholarship. 
The second author was partially supported by NSFC grant No. 125B2006. We thank Samuel Taylor for informing us \cite[Theorem~1.5]{LMST}.


\section{Preliminaries}

\begin{notation}
For a compact $n$-manifold $M$ and a codimension-1 proper sub-manifold $F$,  we use $M\bbslash F$ to denote the compact 
$n$-manifold obtained by cutting $M$ open along $F$.
\end{notation}

We state Thurston's geometric decomposition theorem of Haken 3-manifolds which builds on
   Jaco--Shalen--Johannson's torus decomposition theorem  (see \cite{Th1},  \cite{Ha}).

\begin{theorem} \label{Thurston}Let $M$ be a compact orientable irreducible $3$--manifold with (possibly empty) boundary consisting of incompressible tori.
Up to isotopy, there is a unique minimal finite collection of disjoint embedded incompressible tori $\mathcal{T}$  in $M$ (JSJ tori of $M$)  such that any component  of $M\bbslash \mathcal{T}$ (JSJ pieces of $M$) either is
a Seifert manifold (Seifert piece), or
 admits a hyperbolic structure with finite volume (hyperbolic piece).
\end{theorem}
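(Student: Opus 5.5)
This theorem is classical, so I would assemble it from two deep inputs rather than prove it from scratch: the Jaco--Shalen--Johannson characteristic submanifold theory, which gives the torus decomposition and its uniqueness, and Thurston's hyperbolization theorem for Haken manifolds, which identifies the remaining pieces. The work in the plan is to check that the hypotheses match and that minimality and uniqueness survive.

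\emph{Existence of a finite collection.} Since $M$ is compact, orientable, irreducible with incompressible torus boundary, it is Haken whenever $\partial M\neq\emptyset$ or it contains an incompressible surface. If $M$ is closed and non-Haken, the collection is empty, and the conclusion requires the geometrization of such $M$; I would simply cite \cite{Th1} and its completion here. Otherwise, fix a triangulation of $M$. By Kneser--Haken finiteness, any family of pairwise disjoint, pairwise non-parallel, incompressible, non-boundary-parallel tori has cardinality bounded in terms of the number of tetrahedra. Hence a maximal such family $\mathcal T_0$ exists. The JSJ theorem then produces the characteristic submanifold. I would take $\mathcal T$ to be its frontier tori, discarding those that cobound a $T^2\times I$ or bound pieces that glue to a single Seifert fibration. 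These are exactly the tori along which the fibrations on the two sides agree up to isotopy.

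\emph{Identifying the pieces.} Let $P$ be a component of $M\bbslash\mathcal T$. By construction, every incompressible torus in $P$ is either boundary parallel or lies in a Seifert-fibered part of the characteristic submanifold. So if $P$ is not Seifert fibered, it is irreducible, has incompressible torus boundary, and is atoroidal. It is also not Seifert, so it contains no essential annulus. This uses the annulus theorem: an essential annulus in such a $P$ would force a Seifert structure on part of $P$. Since $\partial P\neq\emptyset$ (or $P$ is Haken), Thurston's hyperbolization theorem for Haken manifolds gives a complete finite-volume hyperbolic structure on the interior of $P$.

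\emph{Uniqueness and the main obstacle.} The hardest step is uniqueness up to isotopy of the minimal collection. I would first prove the enclosing property: any incompressible torus in $M$ can be isotoped off $\mathcal T$ and into a Seifert piece, or made parallel to a component of $\mathcal T$. Then, given two minimal collections $\mathcal T,\mathcal T'$, each torus of $\mathcal T'$ isotopes into $M\bbslash\mathcal T$. Hyperbolic pieces contain no essential tori. In Seifert pieces, essential tori are vertical, and by minimality they cannot be essential tori of a piece, since otherwise that piece would be split unnecessarily. So each torus of $\mathcal T'$ is parallel to a torus of $\mathcal T$, and by symmetry the two collections agree. The delicate point is handling the small Seifert pieces ($T^2\times I$, the twisted $I$-bundle over the Klein bottle, and pieces with non-unique fibrations), where the vertical-torus argument needs the classification of Seifert manifolds with non-unique fibrations. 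I would treat these cases separately, exactly as in \cite{Ha}.
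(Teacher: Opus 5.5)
The paper does not prove this theorem; it quotes it from \cite{Th1} and \cite{Ha}. Your overall plan is the standard assembly behind those citations: Kneser--Haken finiteness and JSJ for the tori, Thurston's hyperbolization for the atoroidal non-Seifert pieces, and geometrization for closed non-Haken $M$ (which is genuinely beyond \cite{Th1}). The existence half is fine, and it is simpler than you make it. Your maximal Kneser--Haken family $\mathcal{T}_0$ already has the required property, since its pieces are atoroidal and hence Seifert or hyperbolic by the annulus lemma and hyperbolization. So any subcollection of $\mathcal{T}_0$ that is minimal with this property gives existence, and the detour through the frontier of the characteristic submanifold, with its loose discarding rule, is not needed.

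The uniqueness paragraph, however, has a genuine gap. The enclosing property you can actually justify comes from the characteristic submanifold, so it holds only for the collection you constructed, not for an arbitrary minimal $\mathcal{T}$. Hence ``by symmetry'' is unsupported, and you never check that your constructed collection is itself minimal. What is missing is the one argument where minimality does the work, a fiber-matching lemma:
\begin{itemize}
\item Suppose a torus $T_i$ of a minimal collection $\mathcal{T}$ cannot be isotoped off an incompressible torus $S$. After minimizing $|S\cap\mathcal{T}|$, the components of $S\bbslash\mathcal{T}$ meeting $T_i$ are essential annuli in the pieces on either side of $T_i$.
\item An irreducible atoroidal piece with torus boundary that contains an essential annulus is Seifert fibered with that annulus vertical. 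Essential annuli in Seifert pieces are also vertical, up to the small exceptions you mention.
\item So both sides of $T_i$ are Seifert, with fibers isotopic on $T_i$ to the curves of $S\cap T_i$. The fibrations then glue across $T_i$, and $T_i$ could be deleted, contradicting minimality.
\end{itemize}

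The same reasoning is needed in your step ``by minimality they cannot be essential tori of a piece.'' The pieces of $M\bbslash\mathcal{T}'$ on the two sides of $T'$ are not the two halves of the Seifert piece $P$ of $M\bbslash\mathcal{T}$ containing $T'$; they extend across the tori of $\mathcal{T}\setminus\mathcal{T}'$. Before declaring $T'$ redundant, you must show, using vertical essential annuli of $P$ with boundary on $T'$, that each of these pieces is Seifert with fiber on $T'$ equal to that of $P$.

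With this lemma either route closes. You can apply it to arbitrary minimal collections, so that symmetry becomes legitimate. Alternatively, prove your $\mathcal{T}$ is minimal, show every minimal $\mathcal{T}'$ isotopes to a subcollection of it, and conclude by minimality of $\mathcal{T}$. This, rather than the small Seifert pieces, is where the content of the uniqueness statement lies.
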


We will use the following convention in the next 3 sections.

\begin{convention}\label{conv:M}
Let $M$ be a compact orientable irreducible $3$-manifold with $\partial M$ being an  incompressible torus $T$. Let $\lambda$ be the unique slope on $T$ which is represented by a rationally null-homologous curve in $M$.
Let $\alpha$ be a slope on $T$, $k_{\alpha}\subset M(\alpha)$ be the core of the Dehn filling. Let $Y$ be the JSJ piece containing $T$. 
\end{convention}

\begin{lemma}\label{sparse1}
The union of finitely many sparse sets is sparse. 
 \end{lemma}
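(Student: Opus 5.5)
The plan is to argue directly from the definition of sparseness: a set $S\subset A_T$ is sparse when its set of accumulation points in $\mathbb P(H_1(T;\mathbb R))$ is finite. Let $S_1,\dots,S_n$ be sparse, and let $P_i\subset \mathbb P(H_1(T;\mathbb R))$ be the finite set of accumulation points of $S_i$. Put $S=S_1\cup\cdots\cup S_n$. The claim to establish is that every accumulation point of $S$ is an accumulation point of some $S_i$. Then the accumulation set of $S$ lies in $P_1\cup\cdots\cup P_n$, which is finite, and so $S$ is sparse.

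To prove the claim, let $p$ be an accumulation point of $S$. Since $\mathbb P(H_1(T;\mathbb R))\cong\mathbb R\mathbb P^1$ is a metric space, there is a sequence of pairwise distinct points $x_m\in S\setminus\{p\}$ with $x_m\to p$. Each $x_m$ lies in some $S_i$. There are only finitely many indices, so by pigeonhole one index $i$ occurs for infinitely many $m$. The corresponding subsequence consists of distinct points of $S_i\setminus\{p\}$ converging to $p$, so $p$ is an accumulation point of $S_i$.

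Equivalently, one can phrase this with the closure identity $\overline{S_1\cup\cdots\cup S_n}=\overline{S_1}\cup\cdots\cup\overline{S_n}$ for finite unions, together with the fact that the derived set satisfies $(A\cup B)'=A'\cup B'$.

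No step here is a real obstacle; the only point requiring care is the pigeonhole argument, which is exactly where the finiteness of the number of sets is used.
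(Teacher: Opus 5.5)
Your proof is correct and takes the same approach as the paper, which simply cites point-set topology for the fact that the accumulation set of $S_1\cup\cdots\cup S_n$ is the union of the accumulation sets of the $S_i$. Your pigeonhole argument in the metric space $\mathbb P(H_1(T;\mathbb R))\cong\mathbb R\mathbb P^1$ supplies exactly the inclusion that finiteness requires.
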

 
 \begin{proof}

Let $S_1,...,S_n$ be sparse sets in $A_T$, which can be identified with \[\mathbb P(H_1(T;\mathbb{Q}))\subset\mathbb P(H_1(T;\mathbb{R})).\] By point set topology the set of accumulation points of $S=\cup_{i=1}^n S_i$ is the union of the set of accumulation points of $S_i$. 
 \end{proof}

Given a slope $\beta$ and a positive integer $p$, define
\[S(\beta, p)=\{\alpha\in A_T|\Delta(\alpha,\beta)=p\}.\]
More generally,
given a set $R$ of slopes, let
\[
S(R,p)=\{\alpha\in A_T|\Delta(\alpha,\beta)=p \text{ for some }\beta\in R\}=\bigcup_{\beta\in R}S(\beta, p).
\]

\begin{remark}\label{sparse2} (1) $A_T$ itself is not sparse, since the closure of $A_T$ is $\mathbb P(H_1(T;\mathbb{R}))$.

(2) Any finite set is sparse. For a given integer $p$ and slope $\beta$, the set $S(\beta, p)$ is sparse. More generally,
given a finite set $R$ of slopes, $S(R,p)$ is sparse.
\end{remark}

We use $N(K)$ to denote the twisted $I$-bundle over the Klein bottle $K$.

\begin{lemma}\label{JSJ}
Suppose that $M\ne N(K)$.
Let $\mathcal T$ be the JSJ tori of $M$,  $M\bbslash \mathcal T= \{Y, M_1, ...., M_k\}$.
Then for all but a sparse set $S_0$ of slopes,  $M(\alpha)$ is still an irreducible 3-manifold with JSJ tori $\mathcal T$ and $M(\alpha)\bbslash \mathcal T= \{Y(\alpha), M_1, ...., M_k\}$. Moreover, if $Y$ is a Seifert piece  (or hyperbolic piece, resp.),  then $Y(\alpha)$ 
is still a Seifert piece which extends the Seifert fibration of $Y$ (or hyperbolic piece, resp.) for all slopes not in $S_0$.
This set $S_0$ can be chosen to be finite if $Y$ is hyperbolic.
\end{lemma}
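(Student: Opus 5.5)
The plan is to split into the two cases for $Y$ given by Theorem \ref{Thurston}, and in each case to show three things for all but a sparse set of slopes: $Y(\alpha)$ is again a geometric piece of the same type, $Y(\alpha)$ is boundary-incompressible when $\partial Y(\alpha)\neq\emptyset$, and gluing back $M_1,\dots,M_k$ yields an irreducible manifold whose JSJ tori are still exactly $\mathcal T$.

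\emph{Hyperbolic case.} By Thurston's hyperbolic Dehn surgery theorem, $Y(\alpha)$ is hyperbolic with finite volume for all but finitely many $\alpha$. The cores of the filling solid tori are short geodesics. In particular, $Y(\alpha)$ is irreducible and atoroidal, and every component of $\partial Y(\alpha)=\mathcal T\cap Y$ is incompressible in $Y(\alpha)$. If $Y=M$, we are done. Otherwise, $M(\alpha)$ is obtained by gluing $Y(\alpha)$ and the $M_i$ along incompressible tori. Hence $M(\alpha)$ is irreducible and the tori of $\mathcal T$ are incompressible in it. Minimality of $\mathcal T$ survives because the only way a JSJ torus could become redundant is if two adjacent pieces were Seifert with matching fibers. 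This cannot happen since $Y(\alpha)$ is hyperbolic, and the other adjacencies are unchanged. So here $S_0$ is finite.

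\emph{Seifert case.} Let $h$ be the regular fiber slope on $T$. By Remark \ref{sparse2}, $S(h,0)\cup S(h,1)$ is sparse, so we restrict to $\Delta(\alpha,h)\ge 2$. Then the Seifert fibration of $Y$ extends over the filling solid torus with an exceptional fiber of order $\Delta(\alpha,h)\ge2$, so $Y(\alpha)$ is Seifert fibered extending the fibration of $Y$. Let $B$ be the base orbifold of $Y$, with one boundary circle on $T$; the base of $Y(\alpha)$ is $B$ capped by a cone point.
\begin{itemize}
\item If $Y=M$, then $B$ has a single boundary component. Irreducibility of $M(\alpha)$ can fail only if the base becomes $S^2$ with at most two cone points, giving lens spaces or $S^1\times S^2$. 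This happens only when $B$ is a disk with at most one cone point, i.e.\ $Y$ is a solid torus. That is excluded because $T$ is incompressible.
\item If $Y\neq M$, the base of $Y(\alpha)$ has nonempty boundary (the other JSJ tori). Incompressibility of $\partial Y(\alpha)$ fails only if $Y(\alpha)$ is a solid torus, which requires $B$ to be an annulus with no cone points and the filling to add a cone point. Then $Y=T^2\times I$, which is not a JSJ piece. We must also exclude $Y(\alpha)$ being $T^2\times I$ or $N(K)$, which have non-unique or non-minimal decompositions. $Y(\alpha)=T^2\times I$ is impossible since the new cone point has order at least $2$. $Y(\alpha)=N(K)$ has base a disk with two order-$2$ cone points, which forces $B$ to be an annulus with one order-$2$ cone point. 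Such a $Y$ is the twisted $I$-bundle $N(K)$ with a solid-torus-like structure, and this case is presumably handled by the hypothesis $M\neq N(K)$ or by a direct check. Either way, at most the finitely many $\alpha$ with $\Delta(\alpha,h)=2$ are excluded.
\end{itemize}

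\emph{Main obstacle: minimality of $\mathcal T$ in $M(\alpha)$.} We must show that no torus of $\mathcal T$ adjacent to $Y(\alpha)$ becomes removable, i.e.\ that the fibrations of $Y(\alpha)$ and an adjacent Seifert piece $M_i$ do not match. Since the fibration of $Y(\alpha)$ extends that of $Y$, the fiber slopes on each JSJ torus are unchanged from $M$. There they did not match, because $\mathcal T$ was minimal. Still, I would check separately the degenerate situations where a piece admits several Seifert fibrations, namely when $Y(\alpha)$ is small, such as $N(K)$ or a twisted $I$-bundle. These occur only for the finitely many slopes already removed above. Collecting the exceptional slopes gives a sparse $S_0$ by Lemma \ref{sparse1}.
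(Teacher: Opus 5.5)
Your proof is correct in substance and has the same skeleton as the paper's. In the hyperbolic case you use Thurston's hyperbolic Dehn filling theorem. In the Seifert case you extend the fibration over the filling torus, then check that $\partial Y(\alpha)$ stays incompressible and that fibers still fail to match across the tori of $\mathcal T$.

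Where you differ is in how the exceptional slopes are located. The paper uses $\chi(O(Y(\alpha)))=\chi(O(Y))+1/p$ with $p=\Delta(\alpha,f_0)$, and discards the finitely many $p$ for which this is $\ge 0$. Negativity then gives incompressible boundary and uniqueness of the fibration of $Y(\alpha)$ at once. You instead enumerate the small Seifert spaces (solid torus, $T^2\times I$, $N(K)$, bases $S^2$ with at most two cone points). You find that only $\Delta(\alpha,h)\le 2$ must be thrown away, which is a sharper exceptional set.

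The paper's route buys the inequality $\chi(O(Y(\alpha)))<0$ itself, which Corollary~\ref{Seifert1} and Lemma~\ref{topology2} rely on later. In the closed case $Y=M$ your restriction does not give it: for the trefoil exterior, with base $D^2(2,3)$, fillings with $\Delta=6$ have Euclidean base $S^2(2,3,6)$. This is not part of the lemma as stated, so it does not affect your proof.

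Two slips in your $N(K)$ discussion need fixing.
\begin{enumerate}
\item A Seifert space over an annulus with one cone point of order $2$ is a $(2,q)$-cable space, not $N(K)$. The hypothesis $M\ne N(K)$ does not rule it out: for the exterior of a $(2,q)$-cable of a nontrivial knot, $Y(\alpha)\cong N(K)$ for every $\alpha$ with $\Delta(\alpha,h)=2$. So you must genuinely discard $S(h,2)$.
\item $S(h,2)$ is infinite, not finite: if $h=1/0$ it is $\{p/2 : p \text{ odd}\}$. It is sparse by Remark~\ref{sparse2}, so $S_0=\{h\}\cup S(h,1)\cup S(h,2)$ is sparse, which is all the lemma claims in the Seifert case.
\end{enumerate}
With $\Delta(\alpha,h)\ge 3$ and $Y\ne M$ (so $Y\ne T^2\times I$ and the base of $Y(\alpha)$ has boundary), the base orbifold of $Y(\alpha)$ has negative Euler characteristic. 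Hence its Seifert fibration is unique up to isotopy, which is exactly the fact your minimality paragraph needs.
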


\begin{proof}
We first consider the case $\mathcal T\ne \emptyset$. 

To prove the lemma, we need only to prove for all but a sparse set of slopes, $Y(\alpha)$ is a JSJ piece of $M(\alpha)$.

\noindent{\bf Case 1.} $Y$ is a hyperbolic piece: $Y(\alpha)$ is still hyperbolic for all but finitely many slopes by Thurston's hyperbolic Dehn filling theorem \cite{Th2}, and then every boundary component of $Y(\alpha)$
must be $\pi_1$-injective and  $Y(\alpha)$ must be a hyperbolic piece of $M(\alpha)$.

\noindent{\bf Case 2.} $Y$ is a Seifert piece: Let $O(Y)$ be the base orbifold of the Seifert fibered space $Y$.
Let $f_0\subset T\subset Y$ be the $S^1$-fiber of $Y$.
Since $Y\ne N(K)$, it is known that $\chi(O(Y))<0$ and then the Seifert fibration is unique up to isotopy. Let $p=\Delta(\alpha,f_0)\ne 0$, then $Y(\alpha)$ is still a Seifert manifold whose $S^1$ fibration extends that of $Y$,  and
$$\chi(O(Y(\alpha)))=\chi(O(Y))+1/p.$$
Clearly with finitely many exceptional $p$,  $\chi(O(Y(\alpha)))<0$. For each  fixed integer $p$, 
the set $S(f_0,p)$  is sparse in $A_{T}$ (Remark~\ref{sparse2}). Since the union of finitely many sparse sets is still sparse (Lemma~\ref{sparse1}), we proved that for all but a sparse set of slopes,  $Y(\alpha)$ is a Seifert manifold with $\chi(O(Y(\alpha)))<0$.
So every boundary component of $Y(\alpha)$
must be $\pi_1$-injective.

Now every JSJ piece $M_i$ adjacent to $Y$ is either a hyperbolic piece or a Seifert piece whose
$S^1$-fiber is different from the fiber of $Y$, therefore different from the fiber of $Y(\alpha)$. 
So $Y(\alpha)$ is a Seifert piece of $M(\alpha)$.

When $\mathcal{T}=\emptyset$, $M$ is Seifert or hyperbolic, the argument is similar and more direct.
\end{proof}

\begin{lemma}\label{topology1}
Suppose that $M$ is fibered by a surface $F$. Then each JSJ piece $M_i$ of $M$ is fibered by a surface $F'_i$,
where $F'_i$ is a component of $F\cap M_i$ after some isotopy of $F$.
\end{lemma}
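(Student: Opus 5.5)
Our plan is to isotope the fiber $F$ so that it meets the JSJ tori $\mathcal T$ minimally, then show that each resulting piece of $F$ is a fiber of the corresponding JSJ piece. We assume $F$ is not a sphere; if $F$ is a sphere then $M=S^2\times S^1$ is itself Seifert with no JSJ tori, and there is nothing to prove. So $F$ is incompressible, $M$ is irreducible, and the fibration gives a map $\pi:M\to S^1$ with $F=\pi^{-1}(pt)$.

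\emph{Step 1: put each JSJ torus in standard position.} Let $T_j\in\mathcal T$. Since $T_j$ is incompressible, it lifts to the infinite cyclic cover $\widetilde M\cong F\times\mathbb R$ determined by $\pi$. By the standard classification of incompressible surfaces in surface bundles (Waldhausen/Thurston), $T_j$ can be isotoped to be one of two kinds:
\begin{itemize}
\item \emph{vertical}: transverse to the fibers, so that $\pi|_{T_j}$ is a fibration by circles;
\item \emph{horizontal}: isotopic to a fiber.
\end{itemize}
A fiber is connected, and $T_j$ is a torus, so the horizontal case forces $F$ to be a torus; then $M$ is a torus bundle. We handle that case separately: torus bundles are Seifert, Sol, or Euclidean, and in each case the JSJ pieces are $T^2\times I$, which is visibly fibered by annuli, or the whole $M$ itself. (The $N(K)$ double-cover situations need a small check.) In the main case every $T_j$ is vertical.

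\emph{Step 2: make all the tori vertical simultaneously.} The tori are disjoint, and isotoping them one at a time risks creating intersections. We instead isotope $\pi$ rather than the tori. Start by adjusting $F$ so that $F\cap\mathcal T$ is a union of essential curves, with no inessential curves and no components of $F$ that are disks or annuli parallel into $\mathcal T$; incompressibility and irreducibility allow innermost-disk and outermost-annulus arguments. This is the same as choosing the fibration so that the closed $1$-form $\pi^*d\theta$ restricts to each $T_j$ as a nonsingular form. Equivalently, by Thurston's criterion, we argue that the class $[F]\in H^1$ restricts to a class on each piece $M_i$ that lies in the cone over a fibered face. We regard this step as the main obstacle: obtaining a simultaneous isotopy making $\pi$ transverse to all of $\mathcal T$.

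The route we would try first is this. Take the suspension flow of the monodromy $\phi$. After isotoping $\phi$ to its Nielsen--Thurston normal form, which is compatible with the reducing curve system, the invariant tori of $\mathcal T$ become unions of flow lines. This holds because $\mathcal T$ corresponds, via the canonical reduction system of $\phi$, to tori swept out by reduction curves. Any torus made of flow lines is automatically transverse to $F$.

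\emph{Step 3: conclude.} Once $\pi$ is transverse to $\mathcal T$, the restriction $\pi|_{M_i}:M_i\to S^1$ is a submersion that is a fibration on each boundary torus. Since $M_i$ is compact, Ehresmann's theorem with boundary shows $\pi|_{M_i}$ is a fibration. Its fiber $F\cap M_i$ may be disconnected. In that case we factor $\pi|_{M_i}$ through a finite cover $S^1\to S^1$ to get a fibration with connected fiber $F'_i$, a component of $F\cap M_i$. This gives the lemma.
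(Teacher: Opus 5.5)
The route you actually rely on --- putting $\phi$ in Nielsen--Thurston normal form so that the JSJ tori are swept out by the canonical reduction curves and hence are unions of suspension flow lines, then restricting the fibration to each piece and passing to a finite cover of $S^1$ to make the fiber connected --- is exactly the paper's proof, which likewise simply asserts that the JSJ pieces are the mapping tori $\mathcal M(F_i,\phi|F_i)=\mathcal M(F'_i,\phi^{n_i}|F'_i)$. (Both that argument and the statement itself break down for Sol torus bundles, where the single JSJ piece $T^2\times I$ fibers only by annuli that are not pieces of $F$, so your weaker conclusion there is the best possible.)

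Your Step~1 and the other Step~2 ideas are then superfluous, and partly wrong as written: a vertical torus does not lift to the infinite cyclic cover, and making $F\cap\mathcal T$ essential is not by itself the same as nonsingularity of $\pi^*d\theta$ on $\mathcal T$. The latter idea can, however, be completed into an independent proof: after minimizing $|F\cap\mathcal T|$, the components of $\mathcal T\bbslash F$ are essential annuli in $M\bbslash F\cong F\times I$, so they run from $F\times 0$ to $F\times 1$ and by Waldhausen are vertical in a suitable product structure.
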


\begin{proof}
Suppose that $M=\mathcal M(F,\phi)$, the mapping torus of $\phi:F\to F$. By Nielsen--Thurston's theory for surface automorphisms \cite{FM},  there is a minimal reducible curve 
system $\Gamma_\phi\subset F$, which is unique up to isotopy, such that $F\bbslash \Gamma_\phi=\cup_{i=1}^k  F_i$, where
 each $ F_i$ is a minimal union of components of $F\bbslash \Gamma_\phi$ such that $\phi(F_i)=F_i$, and each $\phi|F_i$ is either pseudo-Anosov or periodic. Then the JSJ pieces of $M$ are exactly $\mathcal M( F_i,\phi|F_i)$, $1\le i\le k$. Each $\mathcal M( F_i,\phi|F_i)$ is a hyperbolic piece or a Seifert piece if $\phi|F_i$ is pseudo-Anosov or periodic, respectively. Each $\mathcal M( F_i,\phi|F_i)$ can be written as 
 $\mathcal M(F'_i, \phi^{n_i}|F'_i)$, where $F'_i$ is a component of $ F_i$ and $n_i$ is the number of components of $F_i$.
\end{proof}


\begin{proposition}\label{zero}
For each $\alpha \in A_T\setminus\{\lambda\}$,

(1) The map $H_1(T;  \mathbb R)\to H_1(M(\alpha);  \mathbb R)$ induced by the inclusion $T\subset M(\alpha)$ is trivial.

(2) If $M(\alpha)$ is fibered by a surface $F$, then the algebraic intersection number $[F]\cdot [k_\alpha]=0$.
\end{proposition}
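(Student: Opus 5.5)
Part (1) rests on the "half lives, half dies" principle for the boundary torus, and part (2) follows from it by Poincaré duality.

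\textbf{Step 1: the kernel of $i_*$.} Let $i_*\colon H_1(T;\mathbb R)\to H_1(M;\mathbb R)$ be induced by inclusion. The long exact sequence of the pair $(M,T)$, together with Poincaré--Lefschetz duality, shows that $\ker i_*$ is a half-dimensional subspace of $H_1(T;\mathbb R)\cong\mathbb R^2$. Hence $\ker i_*$ is exactly the line spanned by $\lambda$, and $i_*$ maps $H_1(T;\mathbb R)$ onto a one-dimensional image.

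\textbf{Step 2: part (1).} The filling $M(\alpha)$ is obtained from $M$ by attaching a $2$-handle along $\alpha$ and then a $3$-ball. So by Mayer--Vietoris,
\[
H_1(M(\alpha);\mathbb R)=H_1(M;\mathbb R)/\langle i_*[\alpha]\rangle .
\]
Since $\alpha\neq\lambda$, we have $i_*[\alpha]\neq 0$, so $i_*[\alpha]$ spans the whole image $i_*(H_1(T;\mathbb R))$. Therefore every class of $H_1(T;\mathbb R)$ becomes zero in $H_1(M(\alpha);\mathbb R)$.

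\textbf{Step 3: part (2).} The core $k_\alpha$ is isotopic into the solid torus, hence onto $T$. Its class therefore lies in the image of $H_1(T;\mathbb R)$, which is zero by part (1). So $[k_\alpha]=0$ in $H_1(M(\alpha);\mathbb R)$. The algebraic intersection number $[F]\cdot[k_\alpha]$ depends only on the homology classes, computed over $\mathbb R$ (equivalently, evaluate the dual class of $F$ in $H^1$ on $[k_\alpha]$). Hence $[F]\cdot[k_\alpha]=0$.

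\textbf{Main obstacle.} The only step needing care is the half-dimensional kernel statement in Step 1. I would verify it by the standard argument: the image of $H_2(M,T;\mathbb R)\to H_1(T;\mathbb R)$ is isotropic with respect to the intersection form on $T$. The duality diagram then shows this image has dimension exactly $1$, and by exactness it equals $\ker i_*$. The uniqueness of $\lambda$ in Convention~\ref{conv:M} already implicitly uses this fact, so it may be cited as well known.
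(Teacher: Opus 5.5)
Your proposal is correct and follows essentially the same route as the paper: half-lives-half-dies to get $\ker\bigl(H_1(T;\mathbb R)\to H_1(M;\mathbb R)\bigr)=\langle\lambda\rangle$, then pushing $k_\alpha$ into $T$ for part (2). The only difference is cosmetic. In part (1) the paper avoids the Mayer--Vietoris quotient and simply notes two facts: $\lambda$ already dies in $M$, and $\alpha$ bounds the meridian disk. Since $\alpha$ and $\lambda$ form a basis of $H_1(T;\mathbb R)$, the map is zero.
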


 \begin{proof} (1) Let $j: T=\partial M\to  M$ be the inclusion. Then we have 
$$j_*: H_1(T;  \mathbb R)=H_1(\partial M;  \mathbb R)\to H_1( M;  \mathbb R).$$
By the half-die and half-survive Lemma, $\dim\ker j_*=1$. In fact, it is generated by the slope $\lambda$ in Convention~\ref{conv:M}.

Let $\alpha$ be any slope other than $\lambda$.
Let $i: T\to M \to M(\alpha)$ be the inclusion. Then we have 
$$i_*: H_1(T;  \mathbb R)\to H_1( M;  \mathbb R)\to H_1( M(\alpha);  \mathbb R)=H_1( M\cup_{\alpha=\partial D^2} S^1\times D^2;  \mathbb R).$$
Since $j_*(\lambda)=0 \in H_1( M;  \mathbb R)$, we have that $i_*(\lambda)=0   \in H_1( M(\alpha);  \mathbb R)$.
Since $\alpha$ bounds a disk $D^2$ in $S^1\times D^2$, we obtain that $i_*(\alpha)=0\in H_1( M(\alpha);  \mathbb R)$.
Since $\alpha$ and $\lambda$ form a basis of $H_1(T; \mathbb R)$, we conclude that 
$i_*=0$.

(2) Since $k_\alpha$ can be pushed into $T$, by (1) $k_\alpha=0\in H_1( M(\alpha); \mathbb R)$.
If $M(\alpha)$ is fibered by a surface $F$, then the algebraic intersection number $[F]\cdot [k_\alpha]=0$.
\end{proof}




\section{The case of hyperbolic boundary piece}

We state   Theorem \ref{main1} when $Y$ is a hyperbolic  piece as follows.

\begin{proposition} \label{hyperbolic} Suppose that
 $Y$ is hyperbolic. Then the set $\{\alpha\in A_T| M(\alpha)\, \text{ is fibered}\}$ is sparse.
\end{proposition}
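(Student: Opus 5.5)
We will show that the fibered fillings accumulate only at finitely many slopes, using the Thurston norm on $H_2(M,\partial M;\mathbb R)$ and its behaviour under Dehn filling. By Lemma~\ref{JSJ}, after discarding finitely many slopes, $Y(\alpha)$ is a hyperbolic JSJ piece of $M(\alpha)$ and the JSJ tori of $M(\alpha)$ are those of $M$. If $M(\alpha)$ is fibered by $F$, Lemma~\ref{topology1} shows that $Y(\alpha)$ is fibered by a component $F'$ of $F\cap Y(\alpha)$. By Proposition~\ref{zero}(2), $[F]\cdot[k_\alpha]=0$. So, up to finitely many exceptions, it suffices to show the following for a hyperbolic $Y$ with $T\subset\partial Y$: the set of $\alpha$ such that $Y(\alpha)$ (or $M(\alpha)$) fibers with fiber of algebraic intersection zero with the core $k_\alpha$ is sparse.

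First, take a fiber $F$ of $M(\alpha)$ and isotope it to meet $N(k_\alpha)$ minimally. Then $S=F\cap M$ is a properly embedded surface in $M$ whose boundary on $T$ consists of $n$ curves of slope $\alpha$. Since $[F]\cdot[k_\alpha]=0$, these curves occur in cancelling pairs, so $[S]\in H_2(M,\partial M)$ has boundary class zero on $T$. That class therefore comes from $H_2(M)$, essentially a closed-surface class, and lies in a fixed finite-dimensional subspace $V$ independent of $\alpha$. By Gabai \cite{Ga} and Lackenby \cite{La}, for all $\alpha$ outside a finite set (hyperbolic $Y$, with $\alpha$ long enough), the Thurston norm of $M(\alpha)$ restricted to the image of $V$ agrees with the norm on $M$, and norm-minimizing surfaces in $M$ stay minimizing after filling. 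The fibered faces of $M(\alpha)$ coming from $V$ then correspond to the finitely many top-dimensional faces of the norm ball of $M$ restricted to $V$, a finite combinatorial datum. Hence there is a fixed surface class $\Sigma$ in $M$, disjoint from $T$ after isotopy, that becomes a fiber in $M(\alpha)$ for infinitely many $\alpha$ only if it is taut in $M$ and $M\bbslash\Sigma$ is a sutured manifold in which $k_\alpha$-surgery produces a product.

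Concretely, $M(\alpha)\bbslash \Sigma\cong \Sigma\times I$, and $M\bbslash\Sigma$ is $\Sigma\times I$ minus a neighborhood of a knot $K$ (the core, which misses $\Sigma$). Infinitely many fillings of this knot complement yielding $\Sigma\times I$ is exactly the situation of the first author's theorem \cite{Ni} on Dehn surgery on knots in product manifolds. That theorem says a nontrivial surgery on a knot in $\Sigma\times I$ yielding $\Sigma\times I$ forces $K$ to be a cable or a knot in a $1$-handle/braid-like position, with the surgery slopes constrained to distance $1$ from a fixed slope (or finitely many). So the exceptional slopes for each of the finitely many candidate faces lie in a set of the form $S(R,p)$ with $R$ finite, which is sparse by Remark~\ref{sparse2}. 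Taking the finite union (Lemma~\ref{sparse1}) together with the finitely many slopes discarded earlier finishes the proof. Hyperbolicity of $Y$ is what rules out the cabled and Seifert cases, which would give infinitely many fibered fillings, and keeps the exceptional set sparse.

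The main obstacle is the middle step: showing uniformly in $\alpha$ that the fiber class of $M(\alpha)$ comes from one of finitely many fixed classes or faces in $M$, and that the fiber can be isotoped off the core. This needs Lackenby's control of which slopes can reduce the norm, together with a convexity argument on the norm ball restricted to $V$. The extra complication is that fibers are only rational multiples of integral classes, so I would work projectively with the cones over the faces. I would try first to show that, for long $\alpha$, a fibered face of $M(\alpha)$ pulls back to a face of $M$ whose cone contains a class represented by a taut surface disjoint from $T$, and then apply \cite{Ni}.
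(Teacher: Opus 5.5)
Your outline follows the paper's route: reduce to $Y(\alpha)$ via Lemma~\ref{JSJ}, Lemma~\ref{topology1} and Proposition~\ref{zero}; show the norm on $H_2(Y,\partial Y\setminus T;\mathbb R)$ survives filling; reduce to finitely many faces; find a fixed fiber missing the core; apply Theorem~\ref{Ni}. However, the middle step, which you yourself flag as the main obstacle, is asserted rather than proved. The idea that closes it never appears: Thurston's uniqueness of taut representatives in a fibered class (Theorem~\ref{norm}(3)). Algebraic intersection $[F_0]\cdot[k_\alpha]=0$ only lets you surger the fiber $F_0$ of $Y(\alpha)$ near $k_\alpha$ to a homologous surface $F_0'\subset Y$; it gives no isotopy. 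The paper takes a taut $F'_{\min}\subset Y$ representing $[F_0']$. Once $i_\alpha$ is norm-preserving, $\chi_-(F'_{\min})=th_{Y(\alpha)}([F_0])$, so $F'_{\min}$ is taut in $Y(\alpha)$ and hence isotopic to $F_0$, which therefore misses $k_\alpha$ (Lemma~\ref{avoid3}). The same argument, applied to a taut $F_i\subset Y$ representing a multiple of the barycenter $v_i$ of a face, shows that the fiber is (a component of) the same surface $F_i$ for every slope in the corresponding set $S_i'$. Only this puts you in the setting of Theorem~\ref{Ni}: one knot $k_{\alpha_0}$ in one product $F_i\times[0,1]$, with several surgeries returning $F_i\times[0,1]$ as pairs with $\partial F_i\times[0,1]$. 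Your ``fixed surface class $\Sigma$'' does not give a fixed surface, so the identification of the complement of $\Sigma$ with $\Sigma\times I$ containing the core is unjustified as written.

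The uniform norm preservation also needs an argument you do not give. Theorem~\ref{non-decreasing} is per surface (at most one bad slope for each norm-minimizing surface disjoint from $T$); it is not a statement about ``long'' slopes. The paper applies it to taut surfaces representing multiples of the barycenters $v_1,\dots,v_m$ of the open faces of $B_{Y\setminus T}$, which are rational by Theorem~\ref{norm}(1). Off at most $m$ slopes, $th_{Y(\alpha)}(i_\alpha(v_i))=1$, while $th_{Y(\alpha)}\circ i_\alpha\le 1$ on each face since the norm cannot increase under inclusion. A convex function attaining its maximum at a relative interior point is constant, so $i_\alpha$ is an isometric embedding. Because the image may have codimension one, you then need two further facts, and these are what reduce everything to the $m$ classes $v_i$. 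First, if $i_\alpha(x)$ is a rational fiber class with $x$ of norm one, then $x$ lies in an open face of $B_{Y\setminus T}$ (slice the fibered face by the hyperplane through the origin). Second, fiberedness is constant on $i_\alpha$ of an open face (Lemma~\ref{iff}). Stay in $Y$ rather than sliding to $M$: there the norm is a genuine norm with a compact polyhedral ball, which is what these arguments use. Finally, Theorem~\ref{Ni} says nothing about cables or braids. The knot projects to $F_i$ with at most one crossing, giving either $|S_i'|\le 2$ (slope $0$) or $S_i'=S(\beta_i,1)$ with $\beta_i$ the surface framing (slopes $1/n$). Hyperbolicity is not what excludes infinitely many fibered fillings, since $S(\beta_i,1)$ is infinite. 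It is what provides a polyhedral norm, fibered faces, and a hyperbolic JSJ piece $Y(\alpha)$ for all but finitely many $\alpha$.
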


We  collect some notions and results which will be used in the proof of Proposition~\ref{hyperbolic}.
  
 For a finite-sided $n$-dimensional convex polytope $D\subset \mathbb R^n$, $\partial D$ has a natural cell complex structure, where each $k$-dimensional cell is the interior of a finite-side $k$-dimensional convex polytope. 
 We call each $(n-1)$-dimensional cell of $\partial D$ an {\it open face} of $D$.
  
Given a compact, oriented surface $F$, let $F_1,\dots,F_n$ be its components, define
 \[
\chi_-(F)=\sum_{i=1}^n\max\{-\chi(F_i),0\}.
 \]

Let $X$ be a compact oriented $3$--manifold with boundary consisting of tori.
The {\it Thurston norm} of $X$ \cite{Th2} is a continuous function $$th_X:  H_2(X,\partial X; \mathbb R)\to \mathbb R_{\ge0}$$
  satisfying  $$th_X(x)=\min\{\chi_-(F)| \text{$F$ is a compact, properly embedded surface representing $x$}\},$$
  for each $x \in H_2(X, \partial X; \mathbb Z)$ and 
\[th_X(\lambda y)=|\lambda|\cdot th_X(y), \text{ for every }\lambda\in\mathbb R, y\in H_2(X, \partial X; \mathbb R).\]

A homology class $x \in H_2(X, \partial X; \mathbb{R})$ is a {\it rational class} if some nonzero multiples of $x$ can be represented by compact surfaces;
  call $x$ a {\it fiber class}, if $x$ can be represented by a compact proper surface $F$ such that $X$ is fibered by $F$; call $x$ a {\it rational fiber class}, if a nonzero rational multiple of $x$ is a fiber class. 
  
We call a compact surface $F$ representing $x$ {\it norm-minimizing}, if $\chi_-(F)=th_X(x)$. We say $F$ is {\it taut} if it is norm-minimizing, no torus component of $F$ is homologous to a $2$--sphere,
the union of any nonempty collection of components of $\partial F$ is not null-homologous in $H_1(\partial X;\mathbb{R})$, and the union of any nonempty collection of components of $F$ is not null-homologous in $H_2(X,\partial X;\mathbb{R})$. It is known that every nontorsion integer homology class is represented by a taut surface.
 
 \begin{theorem}\label{norm}\cite{Th2}
If $X$ is hyperbolic, then the Thurston norm
 $th_X$ is a norm on $H_2(X,\partial X; \mathbb R)$   whose unit ball 
 $B_X\subset \mathbb R^\beta$ is a finite-sided convex polytope of dimension $\beta$,  where $\beta=\dim H_2(X,\partial X; \mathbb R)$.
 Moreover,
 
 (1) All vertices of  $B_X$ are rational classes.
 
 (2) Fiber classes  correspond to exactly all lattice points lying in the cone over the union of some open faces of $B_X$. 
 
 (3) In each fiber class $x$, the surface fiber is unique (up to isotopy), and a surface representative $F$ is a surface fiber if and only if $F$ is taut.
\end{theorem}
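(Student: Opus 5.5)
Since Theorem~\ref{norm} is Thurston's, the plan is to follow \cite{Th2}. I would do this in three stages: the norm property, then the polyhedral structure, then the fibered faces together with uniqueness of fibers.

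\textbf{Stage 1: $th_X$ is a norm.} On integral classes, subadditivity $th_X(x+y)\le th_X(x)+th_X(y)$ comes from cut-and-paste. Take norm-minimizing representatives $F_x,F_y$ in general position and do oriented double-curve sum along $F_x\cap F_y$. This gives a surface representing $x+y$ with the same Euler characteristic. Any sphere or disc components can be discarded, since $X$ is irreducible with incompressible boundary.

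For homogeneity $th_X(nx)=n\,th_X(x)$, I would follow Thurston. Take a norm-minimizing surface for $nx$ and use a map $X\to S^1$ dual to $x$ to show the surface can be arranged to split into $n$ surfaces, each representing $x$. Then extend $th_X$ to $H_2(X,\partial X;\mathbb Q)$ by homogeneity, and to the reals by continuity: a convex homogeneous function is Lipschitz.

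Definiteness uses hyperbolicity. The manifold $X$ is irreducible, atoroidal and anannular, so every essential surface with $\chi\ge0$ is a boundary-parallel torus or annulus, which is null in $H_2(X,\partial X)$. Hence any nonzero integral class has $th_X(x)>0$. Definiteness on all real classes then follows from a compactness argument on the lattice.

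\textbf{Stage 2: the unit ball is a polytope with rational vertices.} The key input is that $th_X$ takes integer values on the integer lattice. I would pass to the dual norm $th^*$ on $H^2$, and show that the dual unit ball is the convex hull of the integral points $\xi$ satisfying $|\xi(x)|\le th_X(x)$ for all integral $x$. Since the dual ball is bounded, this is a finite set, so the dual ball is a rational polytope. Dualizing gives (1).

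This step is where the integrality must be exploited carefully. I would use Thurston's argument: at an integral class, a supporting functional for the integer-valued convex function can be chosen integral.

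\textbf{Stage 3: fibered faces and uniqueness of fibers.} Let $F$ be a fiber. Its class is dual to a nowhere-vanishing closed $1$-form $\omega$, and every nearby closed $1$-form is still nonvanishing. By Tischler's theorem, rational classes near $[F]$ are therefore fiber classes, so the set of fiber classes is open.

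For any fibered class, let $e$ be the Euler class of the tangent plane field of the fibration. Compact leaves of a taut foliation are norm-minimizing, which gives $th_X=-e$ on the whole fibered cone; hence that cone lies over an open face. The main obstacle is the converse: every lattice point in the open cone over that face must be fibered. For this I would try Fried's description of the fibered cone, as the classes positive on the closed orbits of the suspension pseudo-Anosov flow, and show this cone is exactly the open cone over the face. That gives (2).

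For (3), let $S$ be a taut surface in a fiber class. Lift $S$ to the infinite cyclic cover $F\times\mathbb R$, where it is incompressible. By Waldhausen it is isotopic to a union of horizontal fibers, and tautness forces exactly one fiber. Conversely, fibers are taut because they are norm-minimizing.
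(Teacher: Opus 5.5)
The paper gives no proof of this statement; it just quotes \cite{Th2}. Your Stages 1--2 and your argument for (3) follow Thurston's route. The genuine gap is the converse half of (2), which you flag but do not resolve.

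Openness of the classes carried by nonsingular closed $1$-forms, together with $th_X=-e$ near a fiber class, only shows that the fiber classes near $[F]$ form an open subcone of the open cone over some top-dimensional face $P$. The theorem says they fill all of it. Fried's description does not give this. It identifies the cross-section classes of the suspension flow with an open convex cone (the classes positive on the cone of homology directions), and since cross-sections are fibers this cone again lies inside the cone over $P$. But nothing in that description prevents its boundary from passing through the interior of the cone over $P$. Ruling that out is exactly what has to be proved, so ``show this cone is exactly the open cone over the face'' is the whole difficulty, not a step.

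The missing idea is Thurston's tangency count. Let $\omega$ be a nonsingular closed $1$-form with $[\omega]$ in the open cone over $P$, so that $-e$ is the linear functional agreeing with $th_X$ on the closed cone over $P$. Let $a$ be an integral class in the open cone, represented by a norm-minimizing incompressible surface $S$ with no sphere or disc components. By Roussarie--Thurston general position, isotope $S$ to be transverse to $\ker\omega$ except at finitely many saddle tangencies. If $n_+$ (resp.\ $n_-$) of these have orientation agreeing (resp.\ disagreeing) with the fibers, then $th_X(a)=-\chi(S)=n_++n_-$ and $-e(a)=n_+-n_-$. So $th_X(a)=-e(a)$ forces $n_-=0$. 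Now take a closed $1$-form $\eta$ dual to $S$, supported in a thin collar of $S$. Then $\omega+t\eta$ can vanish only where a parallel copy of $S$ has a negatively oriented tangency, and there are none. Hence $\omega+t\eta$ is nonsingular for every $t\ge0$ (with the analogous bookkeeping along $\partial X$), so $[\omega]+ta$ is a rational fiber class for all rational $t\ge0$. Applying this to an integral class $a'$ in the open cone, chosen so that $a$ is a positive combination of $[\omega]$ and $a'$, shows that $a$ itself is fibered.

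There are also two smaller problems. First, in Stage 1, discarding sphere and disc components of the double-curve sum $\Sigma$ does not give subadditivity. If $F_x,F_y$ have no components of positive Euler characteristic, then $\chi_-(\Sigma)=\chi_-(F_x)+\chi_-(F_y)+2\#\{\text{spheres}\}+\#\{\text{discs}\}$, and deleting those components leaves $\chi_-$ unchanged. You must prevent such components before summing, by removing inessential intersection curves and arcs; this is where incompressibility of the representatives, irreducibility of $X$, and incompressibility of $\partial X$ enter. Second, positivity of $th_X$ on nonzero lattice points does not by itself give definiteness: $(s,t)\mapsto|s-\sqrt2\,t|$ on $\mathbb R^2$ is positive there but is not a norm. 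You need $th_X\ge1$ on nonzero lattice points (integrality), combined with Dirichlet approximation and the Lipschitz bound. Alternatively, read definiteness off Stage 2: there the null space is cut out by finitely many integral functionals, so it is a rational subspace, which must be zero.
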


We call the open faces in Theorem~\ref{norm} (2) {\it fibered faces}. This theorem implies that the non-zero rational classes in the cone over the fibered faces are all the rational fiber classes.


 \begin{theorem}\label{non-decreasing}\cite{Ga}, \cite[Theorem~A.21]{La}
Suppose that $X$ is a compact orientable irreducible $3$-manifold with boundary consisting of tori, $T$ is a component of $\partial X$.  If $F$ is a properly embedded norm-minimizing surface in $X$, $\partial F\cap T=\emptyset$, then $F$ is still norm-minimizing in $H_2(X(\alpha), \partial X(\alpha);\mathbb{R})$
 for all slopes $\alpha$ on $T$ except at most one.
 \end{theorem}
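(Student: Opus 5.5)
The plan is to follow Gabai's sutured manifold approach: produce, for all but one slope $\alpha$ on $T$, a taut foliation of $X(\alpha)$ that has $F$ as a leaf. Then Thurston's theorem that compact leaves of taut foliations are norm-minimizing gives the conclusion.

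First I reduce to the case where $F$ is taut in $X$ and no component of $F$ is separating in a way that could be discarded. This is done by deleting inessential sphere, disk and torus components and compressible pieces, which does not change $\chi_-(F)$ or the homology class in a way that matters. Note that a norm-minimizing surface in $X(\alpha)$ is one whose $\chi_-$ equals $th_{X(\alpha)}([F])$, and $th_{X(\alpha)}([F])\le \chi_-(F)=th_X([F])$ always. So only the reverse inequality needs proof.

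Next, cut $X$ along $F$ to get the sutured manifold $(X\bbslash F,\gamma)$. Here $R_\pm$ are the two copies of $F$, the sutures come from $\partial X\setminus T$ and $\partial F$, and $T$ is kept as a toral suture. Since $F$ is norm-minimizing and $X$ is irreducible, this sutured manifold is taut. By Gabai's existence theorem, there is a taut sutured manifold hierarchy
\[(X\bbslash F,\gamma)=(M_0,\gamma_0)\rightsquigarrow (M_1,\gamma_1)\rightsquigarrow\cdots\rightsquigarrow (M_n,\gamma_n),\]
ending in a product sutured manifold. The key refinement is to choose the decomposing surfaces so that they are disjoint from $T$ for as long as possible and meet $T$ only in parallel essential curves of a single slope $\beta$. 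At the end, $T$ is either a toral component of a piece $T\times I$ or becomes part of the sutures with slope $\beta$.

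Then, for each slope $\alpha\ne\beta$, I fill $T$ with slope $\alpha$ and check that each step of the hierarchy remains a taut decomposition of the filled sutured manifolds. Away from $T$ nothing changes. At the step where $T$ is met, the filled solid torus, with curves of slope $\beta$ on its boundary and $\Delta(\alpha,\beta)\neq 0$, gives a taut piece (in particular it is irreducible, and no disk bounds a suture). Gabai's construction then builds a finite-depth taut foliation of $X(\alpha)$ from the filled hierarchy, with $F$ as a leaf (transverse to or tangent to the boundary tori appropriately). By Thurston, leaves of taut foliations are norm-minimizing, so $\chi_-(F)=th_{X(\alpha)}([F])$. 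The single excluded slope is $\beta$.

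The main obstacle is the middle step: arranging the hierarchy so that $T$ is met by decomposing surfaces along only one slope, and verifying tautness is preserved after filling at that step. This requires Gabai's careful choice of decomposing surfaces, which must be well-groomed and must meet toral components in coherently oriented parallel curves. Lackenby's appendix \cite[Theorem~A.21]{La} gives a streamlined version that I would follow for that part.
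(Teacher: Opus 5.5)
Your outline is the standard Gabai argument, and it is exactly what the paper relies on: the paper gives no proof of its own and simply cites \cite{Ga} and \cite[Theorem~A.21]{La}. The ingredients are a taut hierarchy of $(X\bbslash F,\gamma)$ with $T$ a toral suture, the exceptional slope being the slope $\beta$ in which the first decomposing surface meets $T$, the filled solid torus with slope-$\beta$ sutures being taut precisely when $\alpha\neq\beta$, and then a taut foliation of $X(\alpha)$ with $F$ as a leaf together with Thurston's theorem. As you acknowledge, the real work is at the step where a decomposing surface meets $T$. After filling, that surface is no longer properly embedded, so the hierarchy cannot literally be filled step by step; one must split off the solid torus and pass the foliation across it, and that is precisely the part those references supply.
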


 \begin{theorem}\label{Ni}\cite{Ni}. Suppose that $F$ is a compact surface and $P=F\times [0,1]$, Let  $k \subset P$ be a knot,
 $\alpha$ be a nontrivial slope on $k$. 
 If the pair $(P(k, \alpha),(\partial F)\times [0,1])$ is homeomorphic to the pair $(F\times [0,1], (\partial F) \times [0,1])$, then one can isotope $k$ so that its image in F under the natural projection
$p:F\times [0,1]\to F$
has either no crossings or exactly one crossing. Moreover, if $k$ has been isotoped so that the projection above has the minimum number of crossings and $\alpha$ is measured with respect to the ``blackboard framing'' induced by the projection, then $\alpha=1/n$ if the projection has no crossings and $\alpha=0$ if the projection has one crossing. 
\end{theorem}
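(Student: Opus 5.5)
The plan is to turn the hypothesis into a statement about sutured manifolds and then use the rank-one characterization of product sutured manifolds in sutured Floer homology, combined with a surgery exact triangle. View $P=F\times[0,1]$ as a sutured manifold with suture $\gamma=(\partial F)\times\{1/2\}$, so that $R_\pm=F\times\{1\},F\times\{0\}$. Write $E=P\setminus N(k)$. The hypothesis says exactly that the $\alpha$-filling $E(\alpha)$, with the same suture on $(\partial F)\times[0,1]$, is a product sutured manifold. The steps are as follows.

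\begin{enumerate}
\item Dispose of degenerate cases. If $k$ lies in a $3$-ball, then $P(k,\alpha)$ must be irreducible with $\partial$ unchanged. The cabling/Property~R type results (Gabai) then force $k$ to be unknotted in the ball, i.e.\ a crossingless curve, and $\alpha=1/n$. So assume $k$ is not contained in a ball, and in particular that $E$ is irreducible.
\item Show that $F\times\{0\}$ is norm-minimizing, indeed taut, in $E$. It is taut in $E(\alpha)$ and in $E(\mu)=P$. Gabai's theorem (Theorem~\ref{non-decreasing} in sutured form) then implies that $E$ is taut. Moreover, if $E$ carried a nontrivial taut sutured manifold hierarchy not reaching the boundary torus, then at most one filling could fail to be taut. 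This already restricts $k$ considerably.
\item Apply Juh\'asz's sutured Floer homology. Product sutured manifolds are detected by $SFH\cong\mathbb Z$, so $\mathrm{rank}\,SFH(E(\alpha))=\mathrm{rank}\,SFH(P)=1$. There is a surgery exact triangle relating $E(\mu)$, $E(\alpha)$ and $E(\alpha+\mu)$, and more generally a relation with the sutured manifold $E(\Gamma_\alpha)$ obtained by putting two parallel sutures of slope $\alpha$ on $\partial N(k)$. Since both fillings have rank one, the triangle bounds $SFH(E(\Gamma))$ for a suitable pair of sutures on $\partial N(k)$: its rank is at most $2$, graded in a way controlled by $\Delta(\alpha,\mu)$.
\item Combine this with the fact that $SFH$ is nonzero, and in fact detects the Thurston norm on the relevant relative homology via the Juh\'asz decomposition formula. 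From this, deduce that $E(\Gamma)$ is a product or nearly a product after decomposing along $F\times\{0\}$-parallel surfaces.
\end{enumerate}

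Reading off the geometry comes next. A rank-one (product) conclusion means $k$ is isotopic into a level surface $F\times\{t\}$, which gives a projection with no crossings. A rank-two conclusion should correspond to $E$ decomposing into a product piece together with a single ``clasp'', which gives a projection with exactly one crossing. In both cases the slope is pinned down by requiring the filling to be a product. In the crossingless case, $k\subset F\times\{t\}$ and $\alpha$ must intersect the blackboard framing once, giving $\alpha=1/n$; the resulting homeomorphism is a Dehn twist along the curve. In the one-crossing case, one checks directly that only blackboard slope $0$ yields a product, by recognising $E(0)$ as obtained by cutting along an annulus or M\"obius band.

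I expect the main obstacle to be the passage from the algebraic bound on $SFH(E(\Gamma))$ to the crossing-number conclusion. This requires classifying taut sutured manifolds with $SFH$ of rank at most $2$ in the relevant $\mathrm{Spin}^c$ structures. I would try induction on a sutured hierarchy of $E$ starting from $F\times\{0\}$, using that each decomposition is injective on $SFH$ and that rank one forces a product at each stage. The case analysis when the rank is exactly $2$, together with showing that the minimal-crossing projection has at most one crossing, is where the real work lies.
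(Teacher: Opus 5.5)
The paper gives no proof of this statement; it imports it from \cite{Ni}. So your outline has to stand on its own, and as written it does not: it has a genuine gap.

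\textbf{The rank bound.} The step everything rests on is the bound $\mathrm{rank}\,\mathrm{SFH}(E(\Gamma))\le 2$ in Step~3, and it does not follow from what you state.
\begin{itemize}
\item The surgery exact triangle relates three \emph{fillings} whose slopes are pairwise at distance one. Knowing $\mathrm{SFH}(E(\mu))\cong \mathrm{SFH}(E(\alpha))\cong\mathbb Z$, it bounds only the third filling $E(\alpha\pm\mu)$, and only when $\Delta(\alpha,\mu)=1$.
\item That hypothesis already fails in the crossingless case. There $\alpha=1/n$ relative to the surface framing, so $\Delta(\alpha,\mu)=|n|$, and $\mu,\alpha$ lie in no common triangle.
\item A sutured complement $E(\Gamma_s)$ is the complement of the dual knot in $E(s)$ with meridional sutures. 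Its general relation to $E(s)$ is the spectral sequence from knot Floer homology to the Floer homology of the filled manifold. That gives $\mathrm{rank}\,\mathrm{SFH}(E(\Gamma_s))\ge \mathrm{rank}\,\mathrm{SFH}(E(s))$, which is the wrong direction.
\item In $S^3$, Floer-theoretic arguments of this kind need the full rational surgery mapping cone, not a single triangle. Nothing like that is set up here for knots in $F\times[0,1]$, which need not even be null-homologous in a closure.
\item When $F$ is closed, $(P,\emptyset)$ is not a balanced sutured manifold, so $\mathrm{SFH}(P)\cong\mathbb Z$ is not available in the form you use. The statement allows this case, and the paper needs it when $Y(\alpha)$ is closed.
\end{itemize}

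\textbf{The passage to crossings.} Even granting a rank bound, turning it into a crossing count is the theorem itself; you flag this as ``where the real work lies''. You cite or prove nothing that turns rank one into ``$k$ isotopic into a level $F\times\{t\}$'', or rank two into ``product plus one clasp, hence a one-crossing projection''. A product structure found after a sequence of decompositions has no a priori relation to the given product structure of $P$. Reconciling the two is exactly what must be done.

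\textbf{The auxiliary steps.}
\begin{itemize}
\item Step~2 adds nothing. $E$ is taut as soon as it is irreducible, because $R_\pm$ are norm-minimizing in $P\supset E$. Gabai's ``at most one non-taut filling'' carries no information when two taut fillings, $\mu$ and $\alpha$, are already given. Decomposing along surfaces parallel to $F\times\{0\}$ only splits off collars.
\item The slope conclusions are asserted rather than proved. In the crossingless case you must exclude $\Delta(\alpha,\lambda_F)=|a|\ge 2$, where $\lambda_F$ is the surface framing. This is easy to supply: for essential $k\subset F\times\{\frac12\}$, surgery inside the solid torus $N(k\times[0,1])$ makes $k\times\{0\}$ an $|a|$-th power in $\pi_1(P(k,\alpha))\cong\pi_1(F\times\{0\})$. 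That is impossible for an essential simple closed curve.
\item In the one-crossing case, the claim that only blackboard slope $0$ works is left unchecked.
\item Step~1 should use uniqueness of prime decomposition together with Gordon--Luecke \cite{GL}, not Property~R or cabling results. Gordon--Luecke says nontrivial surgery on a nontrivial knot never gives $S^3$, and the unknot needs $\alpha=1/n$.
\end{itemize}
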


We will use the following convention in this section.

\begin{convention}\label{conv:M2}
Let
$Y$ be the hyperbolic JSJ piece of $M$ containing the torus $T=\partial M$. Let $m$ be the number of open faces of the unit ball of the Thurston norm on $H_2(Y,\partial Y\setminus T; \mathbb{R})$. Let $S_0\subset A_T$ be the finite set as in Lemma~\ref{JSJ}.
\end{convention}

\begin{proposition}\label{prop:FiniteSlopes}
     There exists a subset $S_1\subset A_T$ of at most $m$ slopes, such that the set
    \[
    \{\alpha\in A_T| M(\alpha)\, \text{ is fibered}\}\setminus (\{\lambda\}\cup S_0\cup S_1)
    \]
    has the form
    \[
    S_2\cup S(R,1),
    \]
    for two subsets $S_2,R\subset A_T$ with $\frac12|S_2|+|R|\le m$. 
    
    Moreover, for every $\beta\in R$ and $\alpha\in S(\beta,1)$, we can isotope $k_{\alpha}$ so that it is contained in a fiber of $M(\alpha)$, and $\beta$ is the surface framing induced from the fiber. In this case, for any other $\alpha'\in S(\beta,1)$, $M(\alpha')$ can be obtained from $M(\alpha)$ by cutting open along the fiber then regluing via a power of the Dehn twist along $k_{\alpha}$.
\end{proposition}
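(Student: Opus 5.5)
The plan is to reduce everything to the hyperbolic piece $Y$ and a single norm-minimizing surface per open face, and then to apply Theorem~\ref{Ni}. Fix $\alpha\notin\{\lambda\}\cup S_0$ with $M(\alpha)$ fibered by $F$. By Lemma~\ref{JSJ}, $Y(\alpha)$ is a hyperbolic JSJ piece of $M(\alpha)$. By Lemma~\ref{topology1}, $Y(\alpha)$ is fibered by a component $F'$ of $F\cap Y(\alpha)$.

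The first step is to push $F'$ off the core $k_\alpha$ homologically. By Proposition~\ref{zero}(2), $[F]\cdot[k_\alpha]=0$. I would then argue that the class $[F']\in H_2(Y(\alpha),\partial Y(\alpha);\mathbb R)$ lies in the image of
\[
H_2(Y,\partial Y\setminus T;\mathbb R)\to H_2(Y(\alpha),\partial Y(\alpha);\mathbb R).
\]
The reason is that the meridian disks of $k_\alpha$ in $F'$ cancel algebraically and can be tubed away. Replacing $F'$ by a multiple if needed, this gives a class $z\in H_2(Y,\partial Y\setminus T)$ that maps to a fiber class of $Y(\alpha)$. Since $z\neq0$, its ray meets exactly one cell of $\partial B_Y$, where $B_Y$ is the unit ball of $th_Y$ on $H_2(Y,\partial Y\setminus T;\mathbb R)$. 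I expect to treat rays meeting lower-dimensional cells by passing to an adjacent open face, using that fibered classes form open cones.

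The second step produces $S_1$. For each of the $m$ open faces $\mathcal F$ of $B_Y$, choose a rational class $z_{\mathcal F}$ in the cone over $\mathcal F$ and a taut surface $G_{\mathcal F}\subset Y$ representing it, disjoint from $T$. By Theorem~\ref{non-decreasing}, $G_{\mathcal F}$ stays norm-minimizing in $Y(\alpha)$ for all but at most one slope $\alpha_{\mathcal F}$. Put $S_1=\{\alpha_{\mathcal F}\}$, so $|S_1|\le m$. For $\alpha\notin S_1$, linearity of $th_Y$ on the cone over $\mathcal F$, together with $th_{Y(\alpha)}\le th_Y$ and equality at one interior class, should force equality on the whole cone. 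I would then argue that the fibered face of $Y(\alpha)$ containing the image of $z$ also contains the image of $z_{\mathcal F}$. Theorem~\ref{norm}(3) then says $G_{\mathcal F}$ is a fiber of $Y(\alpha)$. This makes $k_\alpha$ disjoint from a fiber, and
\[
(Y\bbslash G_{\mathcal F})(\alpha)\cong G_{\mathcal F}\times[0,1].
\]
I expect this face-by-face transfer to be the main obstacle. One must check that norm-minimality at one surface, plus convexity, really identifies the fibered face for all classes $z$ landing in $\mathcal F$, and that boundary components on $\partial Y\setminus T$ behave.

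The third step applies Theorem~\ref{Ni} to the fixed manifold $E_{\mathcal F}=Y\bbslash G_{\mathcal F}$. Suppose some slope $\alpha_0$ (outside $\{\lambda\}\cup S_0\cup S_1$) for which $\mathcal F$ is the relevant face gives a product. Then $E_{\mathcal F}$ is the exterior of a knot $k$ in $G_{\mathcal F}\times I$ with a minimal projection of $0$ or $1$ crossings. In the no-crossing case, the slopes giving a product, with $\alpha_0$ among them, are exactly those with $\Delta(\alpha,\beta)=1$, where $\beta$ is the blackboard (surface) framing; put $\beta$ into $R$. In the one-crossing case, only $\alpha_0$ and one further slope occur; put these at most two slopes into $S_2$. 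Each face thus contributes either $1$ to $|R|$ or at most $2$ to $|S_2|$, so $\frac12|S_2|+|R|\le m$. Moreover, every fibered $\alpha$ outside the excluded sets arises in this way, which gives the decomposition $S_2\cup S(R,1)$.

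Finally, in the no-crossing case $k_\alpha$ sits in the fiber $G_{\mathcal F}$ with surface framing $\beta$. Any other $\alpha'\in S(\beta,1)$ differs from $\alpha$ by $1/n$-surgery relative to $\beta$. That surgery is the same as cutting $M(\alpha)$ along the fiber and regluing by the $n$-th power of the Dehn twist along $k_\alpha$, which is the last claim of the proposition.
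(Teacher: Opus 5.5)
Your proposal is correct and follows essentially the paper's route. One rational class per open face of $B_{Y\setminus T}$, together with Theorem~\ref{non-decreasing} and convexity, yields $S_1$ and norm equality on each face cone (Proposition~\ref{isometry}); the face transfer you flag as the main obstacle is exactly the convex-geometry Lemma~\ref{iff}; the fixed taut surface of each face then becomes a common fiber to which Theorem~\ref{Ni} applies. Your two shortcuts are harmless: you skip the isotopy of Lemma~\ref{avoid3}, and you reach an open face by perturbing inside the open fibered cone instead of using the dimension count in Proposition~\ref{finiteness}.

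Two small points need tidying. First, $[F']\cdot[k_\alpha]=0$ for the single component $F'$ follows from $[F]\cdot[k_\alpha]=0$ because the components of $F\cap Y(\alpha)$ are coherently oriented parallel fibers. Second, Theorem~\ref{Ni} gives the $0$/$1$-crossing dichotomy only once a second product-producing slope exists, so a face with just one such slope simply contributes that slope to $S_2$.
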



\begin{proof}[Proof of Proposition \ref{hyperbolic} from Proposition~\ref{prop:FiniteSlopes}]
By Proposition~\ref{prop:FiniteSlopes}, the set $\{\alpha\in A_T| M(\alpha)\, \text{ is fibered}\}$ is the union of a finite set with
$S(R,1)$. By Remark~\ref{sparse2}, $S(R,1)$ is sparse, so $\{\alpha\in A_T| M(\alpha)\, \text{ is fibered}\}$ is also sparse by Lemma~\ref{sparse1}.
\end{proof}



  
  
  
  
  Below we are going to prove Proposition~\ref{prop:FiniteSlopes}. To do that, we need to first prove that the map $i
: H_2(Y,\partial Y\setminus T;\mathbb{R})\to H_2(Y(\alpha),\partial Y(\alpha);\mathbb{R})$ induced by inclusion  is an isometric embedding under some mild conditions (Proposition~\ref{isometry}). Then we prove that under the isometric embedding condition, the core of the Dehn filling $k_\alpha$ can be isotoped to be disjoint from the fiber 
   (Proposition~\ref{avoiding1}),  and there exist finitely many homology classes $v_i$ of $Y$ such that if the Dehn filling manifold $Y(\alpha)$ is fibered, then at least one $i(v_i)$ of $Y(\alpha)$ is a rational fiber class (Proposition~\ref{finiteness}). Those results are proved by applying Theorem~\ref{norm} and Theorem~\ref{non-decreasing}, as well as some observations
  on 3-manifold topology and convex geometry. With those results, we can reduce our problem to the the problem 
  of doing surgery on knots in a product manifold to get the same manifold, and then we can apply Theorem \ref{Ni} to finish the proof of Proposition \ref{prop:FiniteSlopes}.

Now we start the above process.
We have the inclusions $$i_T: (Y, \partial Y\setminus T)\to (Y, \partial Y), \quad i_\alpha: Y\to Y(\alpha).$$

\begin{lemma}\label{injection} 
Both
$$(i_T)_*: H_2(Y, \partial Y \setminus T; \mathbb R)\to H_2(Y, \partial Y; \mathbb R)$$  and $$(i_\alpha)_*:H_2(Y,\partial Y\setminus T; \mathbb{R})\to H_2(Y(\alpha), \partial Y(\alpha);\mathbb{R})$$ are embeddings. Moreover, the image of $(i_\alpha)_*$ has codimension $0$ or $1$.
\end{lemma}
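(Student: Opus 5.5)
Both statements follow from the long exact sequences of pairs and triples together with Lefschetz duality. Write $\partial' = \partial Y\setminus T$, and note that $\partial Y(\alpha)=\partial'$.

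\textbf{The map $(i_T)_*$.} I would use the long exact sequence of the triple $(Y,\partial Y,\partial')$:
\[
H_2(\partial Y,\partial';\mathbb R)\to H_2(Y,\partial';\mathbb R)\xrightarrow{(i_T)_*} H_2(Y,\partial Y;\mathbb R).
\]
By excision, $H_2(\partial Y,\partial';\mathbb R)\cong H_2(T;\mathbb R)\cong\mathbb R$, generated by $[T]$. Since $T$ is a boundary component, the image of $[T]$ in $H_2(Y,\partial';\mathbb R)$ is homologous to minus the sum of the other boundary tori, which is zero relative to $\partial'$. Hence the first map is zero and $(i_T)_*$ is injective.

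\textbf{Injectivity of $(i_\alpha)_*$.} I would factor the map as
\[
H_2(Y,\partial')\to H_2(Y(\alpha),\partial')
\]
and use the sequence of the triple $(Y(\alpha),Y,\partial')$. By excision, $H_2(Y(\alpha),Y)\cong H_2(V,T)$, where $V$ is the filling solid torus. Over $\mathbb R$ we have $H_2(V,T)\cong\mathbb R$, generated by the meridian disk, and $H_3(V,T)\cong H^0(V)\cong\mathbb R$. The relevant portion of the sequence is
\[
H_3(Y(\alpha),\partial')\to H_3(Y(\alpha),Y)\cong\mathbb R\to H_2(Y,\partial')\to H_2(Y(\alpha),\partial').
\]
Since $Y(\alpha)$ is a connected compact oriented 3-manifold, $H_3(Y(\alpha),\partial')\cong\mathbb R$, generated by the fundamental class, which maps to the fundamental class of $(V,T)$. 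So the first map is onto, the connecting map $H_3(Y(\alpha),Y)\to H_2(Y,\partial')$ is zero, and $(i_\alpha)_*$ is injective.

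There is one case to check: if $\partial'=\emptyset$, then $H_3(Y(\alpha))\cong\mathbb R$ still holds because $Y(\alpha)$ is closed, so the argument goes through unchanged.

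\textbf{Codimension of the image.} The sequence continues as
\[
H_2(Y,\partial')\to H_2(Y(\alpha),\partial')\to H_2(Y(\alpha),Y)\cong\mathbb R.
\]
The cokernel of $(i_\alpha)_*$ therefore injects into $\mathbb R$, so the image has codimension $0$ or $1$.

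The only delicate point is the boundary-map bookkeeping, namely checking that the fundamental class maps onto the generator of $H_3(V,T)$. This follows from naturality of fundamental classes under excision.
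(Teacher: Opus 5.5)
Your proof is correct. It differs from the paper's only in how injectivity is established; the codimension argument is the same.

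\textbf{Injectivity.} The paper argues geometrically. A non-separating oriented surface $(F,\partial F)\subset(Y,\partial Y\setminus T)$ stays non-separating in $(Y,\partial Y)$ and in $(Y(\alpha),\partial Y(\alpha))$. Made precise, this is an intersection-pairing argument. A nonzero class in $H_2(Y,\partial Y\setminus T;\mathbb R)\cong H^1(Y,T;\mathbb R)$ pairs nontrivially with some closed loop in $Y$; this uses that $H_1(Y)\to H_1(Y,T)$ is onto because $T$ is connected. That loop, and its intersection number with the surface, survive in both larger manifolds.

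You instead use the exact sequences of the triples $(Y,\partial Y,\partial')$ and $(Y(\alpha),Y,\partial')$. You kill the relevant maps using the relation $[T]=-[\partial']$ in $H_2(Y)$, and using the degree-$3$ fundamental-class computation.

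\textbf{Codimension.} Your argument is identical to the paper's: the cokernel embeds in $H_2(Y(\alpha),Y)\cong H_2(V,T)\cong\mathbb R$.

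\textbf{What each route buys.} Your version is more self-contained and handles all three claims in one framework. The paper's is shorter but leaves the duality step implicit, and it is phrased in the surface language used later in Section 3.

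\textbf{A simplification.} The degree-$3$ bookkeeping you flag as delicate can be avoided. Since $H_3(Y,\partial';\mathbb R)\cong H^0(Y,T;\mathbb R)=0$, the map $H_3(Y(\alpha),\partial')\to H_3(Y(\alpha),Y)$ is an injective map $\mathbb R\to\mathbb R$, hence onto. You never need to track the fundamental class.
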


\begin{proof} The fact that both $(i_T)_*$ and $(i_\alpha)_*$ are injections can be derived from the following geometric observation:
If  a proper oriented surface $(F, \partial F)\subset (Y, \partial Y\setminus T)$ is non-separating,
then $(F, \partial F)\subset (Y, \partial Y)$ and  $(F, \partial F)\subset (Y(\alpha), \partial Y(\alpha))$ are
both non-separating.

Now we prove the ``moreover" part.
The homology long exact sequence associated to $(Y(\alpha), Y, \partial Y\setminus T)$ is
\[
\xymatrix{
H_2(Y,\partial Y\setminus T; \mathbb{R})\ar[r]^-{(i_\alpha)_*} &H_2(Y(\alpha),\partial Y\setminus T;\mathbb{R})\ar[r] &H_2(Y(\alpha), Y, \mathbb{R})}
\]
So 
\[\mathrm{coker}(i_\alpha)_*\subset
H_2(Y(\alpha),Y)\cong H_2(D^2\times S^1, S^1\times S^1)\cong \mathbb{R}.\]
Hence \  $\dim\mathrm{coker}(i_\alpha)_*\le1$.
\end{proof}

By Lemma~\ref{injection}, the Thurston norm $th_Y$ on  $H_2(Y, \partial Y; \mathbb R)$ provides a norm $th_{Y\setminus T}$
on  $H_2(Y, \partial Y\setminus T; \mathbb R)$ via restriction. For hyperbolic $Y(\alpha)$, we also have the Thurston norm $th_{Y(\alpha)}$
on $H_2(Y(\alpha), \partial Y(\alpha); \mathbb R)$.

\begin{proposition}\label{isometry}
Assume that both $Y$ and $Y(\alpha)$ are hyperbolic. Then with the norm $th_{Y\setminus T}$ and $th_{Y(\alpha)}$ given above,
 $(i_\alpha)_*$ is an isometric embedding
(of codimension $0$ or $1$) for all slopes $\alpha\in A_T\setminus S_1$, where  $S_1\subset  A_T$ is a finite set with $|S_1|\le m$.
\end{proposition}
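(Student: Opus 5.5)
We want $th_{Y(\alpha)}((i_\alpha)_*x)=th_{Y\setminus T}(x)$ for all $x\in H_2(Y,\partial Y\setminus T;\mathbb R)$ and all $\alpha$ outside a finite set $S_1$ with $|S_1|\le m$. By Lemma~\ref{injection}, $(i_\alpha)_*$ is an injection, and $th_{Y\setminus T}$ is a genuine norm, being the restriction of $th_Y$ to a subspace. Both sides are norms that are linear on rays, so it suffices to check equality on finitely many rays that determine $th_{Y\setminus T}$.

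\textbf{The inequality $\le$.} This holds for every $\alpha$. Take a rational class $x$ and a norm-minimizing surface $F\subset Y$ representing a multiple of $x$ with $\partial F\cap T=\emptyset$. We may arrange $\partial F\cap T=\emptyset$ because the class lies in $H_2(Y,\partial Y\setminus T)$: boundary components on $T$ are null-homologous in $T$, so they can be capped or tubed off without increasing $\chi_-$. Such care is already implicit in the definition via $(i_T)_*$. Then $F\subset Y(\alpha)$ represents $(i_\alpha)_*$ of that multiple, giving $th_{Y(\alpha)}\circ(i_\alpha)_*\le th_{Y\setminus T}$ on rational classes, and by continuity on all classes.

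\textbf{The inequality $\ge$ on finitely many classes.} The unit ball $B$ of $th_{Y\setminus T}$ is the intersection of $B_Y$ with a rational subspace, so it is a finite-sided polytope with $m$ open faces. For each open face $\sigma_j$, $j=1,\dots,m$, choose a rational class $v_j$ in the interior of the cone over $\sigma_j$, and a norm-minimizing surface $F_j\subset Y$ with $\partial F_j\cap T=\emptyset$ representing a multiple of $v_j$. By Theorem~\ref{non-decreasing}, applied with $X=Y$, $F_j$ remains norm-minimizing in $Y(\alpha)$ for all $\alpha$ except at most one slope $\alpha_j$. Set $S_1=\{\alpha_1,\dots,\alpha_m\}$, so $|S_1|\le m$. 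For $\alpha\notin S_1$ we then have $th_{Y(\alpha)}((i_\alpha)_*v_j)=th_{Y\setminus T}(v_j)$ for every $j$.

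\textbf{Convexity argument (main step).} Let $\alpha\notin S_1$, and put $N=th_{Y(\alpha)}\circ(i_\alpha)_*$, a norm on $H_2(Y,\partial Y\setminus T;\mathbb R)$ since $Y(\alpha)$ is hyperbolic and $(i_\alpha)_*$ is injective. We know $N\le th_{Y\setminus T}$ everywhere. On the cone $C_j$ over $\sigma_j$, $th_{Y\setminus T}$ is linear, say equal to a linear functional $\ell_j$, while $N$ is convex. The function $\ell_j-N$ is therefore concave, nonnegative on $C_j$, and vanishes at the interior point $v_j$.

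A concave function that is $\ge0$ on a convex set and $=0$ at an interior point must vanish identically there. Indeed, for $w\in C_j$, write $v_j$ as a convex combination of $w$ and a point $w'$ of $C_j$ beyond $v_j$ on the line through $w$ and $v_j$. Concavity gives
\[0=(\ell_j-N)(v_j)\ge t(\ell_j-N)(w)+(1-t)(\ell_j-N)(w')\ge0,\]
forcing $(\ell_j-N)(w)=0$. Hence $N=th_{Y\setminus T}$ on each $C_j$, and the cones $C_j$ cover the whole space, so $(i_\alpha)_*$ is isometric. Codimension $0$ or $1$ comes from Lemma~\ref{injection}.

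I expect the main obstacle to be the technical point that the representing surfaces can be chosen with boundary disjoint from $T$ while still norm-minimizing for $th_Y$. This is what lets Theorem~\ref{non-decreasing} apply and lets the restriction norm agree with an honest Thurston-type norm on $(Y,\partial Y\setminus T)$. I would handle it by noting that for classes in the image of $(i_T)_*$, a taut representative has boundary on $T$ null-homologous in $T$. Its components on $T$ can then be paired and annulus-capped or removed without increasing $\chi_-$, keeping the surface taut.
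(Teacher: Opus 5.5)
Your proof is correct and follows the paper's argument: the inequality $th_{Y(\alpha)}\circ(i_\alpha)_*\le th_{Y\setminus T}$, then one rational class per open face kept norm-minimizing by Theorem~\ref{non-decreasing} outside at most $m$ slopes, then the maximum principle for a convex function on a face. You phrase that last step on cones via the concave function $\ell_j-N$, while the paper phrases it on $i_\alpha(P_i)$ at level $1$, which is equivalent. Your small deviations only make explicit what the paper leaves implicit: you take any rational point in the open cone rather than the barycenter, and you justify that the norm-minimizing representatives can be taken disjoint from $T$.
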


\begin{proof}  By Lemma~\ref{injection},  $(i_\alpha)_*$ is an embedding of codimension 0 or 1. 

Below  we often use $i_\alpha(x)$ to indicate $(i_\alpha)_*(x)$ for any $x\in H_2(Y, \partial Y\setminus T; \mathbb R)$ for brief.

Let $B_{Y\setminus T}$ be the unit ball of $H_2(Y, \partial Y \setminus T; \mathbb R)$ under the norm $th_{Y\setminus T}$,
and $B_{Y(\alpha)}$ be the unit ball of $H_2(Y(\alpha), \partial Y(\alpha); \mathbb R)$ under the norm $th_{Y(\alpha)}$.
First note that 
$$th_{Y(\alpha)}(i_\alpha(x))\le  th_{Y\setminus T}(x)$$
for any $x\in H_2(Y, \partial Y\setminus T; \mathbb R)$, since Thurston norm is not increasing under maps.

Let $P_i$ be the open faces of $B_{Y\setminus T}$, $v_i$ be the barycenter of $P_i$, $i=1,..., m$.
 Since the Thurston norm $th_{Y\setminus T}$ on $P_i$ is $1$, the Thurston norm $th_{Y(\alpha)}$ satisfies
 \begin{equation}
 \label{eq1}
 th_{Y(\alpha)}(i_\alpha(x))\le1,\quad\text{for every }x\in P_i.
\end{equation}
 
 By Theorem \ref{norm} (1), we conclude that all $v_i$ are rational.
For each $v_i$, let $F_i$ be a compact surface such that $[F_i]$ is a multiple of $v_i$ and $F_i$ is norm-minimizing.
Each $F_i$ is still norm-minimizing in $Y(\alpha)$ for all slopes in $T$ except at most one by Theorem~\ref{non-decreasing}.
That is to say $th_{Y(\alpha)}(i_\alpha[F_i])=th_{Y\setminus T}([F_i])$, so 
$$th_{Y(\alpha)}(i_\alpha(v_i))=th_{Y\setminus T}(v_i)=1.$$

It concludes that for all slopes $\alpha\in A_T$ with  at most $m$ exceptions, the Thurston norm $th_{Y(\alpha)}$ on every $i_\alpha(v_i)$  is $1$. By (\ref{eq1}), we conclude that the restriction of $th_{Y(\alpha)}$ on the convex set $i_\alpha(P_i)$ reaches its maximum in its interior point $i_\alpha(x_i)$ (since $i_\alpha$ is an embedding).
By the convexity of the Thurston norm $th_{Y(\alpha)}$, we get $th_{Y(\alpha)}\equiv1$ on $i_\alpha(P_i)$.
So  we conclude that $th_{Y(\alpha)}\equiv1$ on $i_\alpha(\partial B_{Y\setminus T})$. Then $i_\alpha$ is an isometric embedding.
\end{proof}

\begin{proposition}\label{avoiding1} Suppose that $Y$ is hyperbolic. 
  If $M(\alpha)$ is fibered by $F$,
  then $F$ is disjoint from $k_\alpha$ for any $\alpha\in A_T\setminus (\{\lambda\}\cup S_0\cup S_1)$, where $S_1$ is the subset of $A_T$ with $|S_1|\le m$ as in Proposition \ref{isometry}.
  \end{proposition}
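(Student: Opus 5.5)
Throughout, assume $\alpha\in A_T\setminus(\{\lambda\}\cup S_0\cup S_1)$ and that $M(\alpha)$ is fibered by $F$. The goal is to isotope $F$ off $k_\alpha$. The idea is to work inside the hyperbolic piece $Y(\alpha)$, where Theorem~\ref{norm}(3) says a fiber is determined by its class. We will show that the fiber class there is realized by a surface which already misses the filling solid torus.

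\textbf{Step 1: reduce to $Y(\alpha)$.} Since $\alpha\notin S_0$, Lemma~\ref{JSJ} says that $M(\alpha)$ has JSJ tori $\mathcal T$ and that $Y(\alpha)$ is a hyperbolic JSJ piece containing $k_\alpha$. By Lemma~\ref{topology1}, after an isotopy of $F$, the surface $F\cap Y(\alpha)$ is a union of parallel copies of a fiber $F'$ of $Y(\alpha)$. The pieces of $F$ outside $Y(\alpha)$ already miss $k_\alpha$. So it suffices to isotope $F\cap Y(\alpha)$ off $k_\alpha$ inside $Y(\alpha)$, by an isotopy that moves $\partial Y(\alpha)$ only within itself. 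Such an isotopy extends over collars of the JSJ tori to an isotopy of $F$.

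\textbf{Step 2: the fiber class comes from $Y$.} Since $\alpha\neq\lambda$, Proposition~\ref{zero}(2) gives $[F]\cdot[k_\alpha]=0$. Because $k_\alpha\subset Y(\alpha)$ and the components of $F\cap Y(\alpha)$ are parallel copies of $F'$, this gives $[F']\cdot[k_\alpha]=0$ in $Y(\alpha)$.

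Now use the exact sequence from the proof of Lemma~\ref{injection}. The cokernel of $(i_\alpha)_*$ injects into
\[
H_2(Y(\alpha),Y)\cong H_2(D^2\times S^1,\partial)\cong\mathbb R,
\]
and this identification is given by algebraic intersection with the core $k_\alpha$. Hence $x:=[F']$ lies in the image of $(i_\alpha)_*$, say $x=i_\alpha(y)$ with $y$ rational. We have $y\neq0$, since $x$ is a fiber class.

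\textbf{Step 3: a norm-minimizing representative off $T$.} Choose $n>0$ so that $ny$ is integral. Pick a taut surface $G\subset Y$ with $\partial G\subset\partial Y\setminus T$ representing $ny$ in $H_2(Y,\partial Y\setminus T)$. Since $\alpha\notin S_1$, Proposition~\ref{isometry} gives
\[
th_{Y(\alpha)}(nx)=th_{Y\setminus T}(ny)=\chi_-(G),
\]
so $G$ is norm-minimizing in $Y(\alpha)$ and is disjoint from the filling solid torus, hence from $k_\alpha$.

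\textbf{Step 4: $G$ is the fiber.} We want to apply Theorem~\ref{norm}(3) to conclude that $G$ is a surface fiber for the class $nx$, that is, $n$ parallel copies of $F'$ up to isotopy. Transporting this isotopy back then gives a fiber of $Y(\alpha)$ disjoint from $k_\alpha$, which finishes the proof via Step~1.

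\textbf{Main obstacle: tautness of $G$ in $Y(\alpha)$.} The main obstacle is checking that $G$ is taut in $Y(\alpha)$, not merely norm-minimizing. Three conditions need attention.
\begin{itemize}
\item[(a)] Torus components: $Y(\alpha)$ is hyperbolic, so there are no essential tori, and any torus or sphere component can be discarded without changing the norm.
\item[(b)] Null-homologous subsets of components of $G$: by Lemma~\ref{injection}, $(i_\alpha)_*$ is injective, so a subcollection of components that is null-homologous in $Y(\alpha)$ was already null-homologous in $Y$. This is excluded by tautness of $G$ in $Y$.
\item[(c)] Null-homologous subsets of boundary components: this needs a little care, because $\partial Y(\alpha)=\partial Y\setminus T$ and the relevant homology changes.
\end{itemize}
If (c) causes trouble, the fallback is to drop such redundant pieces. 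Alternatively, one can use that norm-minimizing representatives of a fibered class can be isotoped to be transverse to the suspension flow and are then unions of fibers. Either route yields fibers disjoint from $k_\alpha$.
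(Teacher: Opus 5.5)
Your proposal is correct and is essentially the paper's argument (Lemma~\ref{avoid3}). The paper puts $[F_0]$ in the image of $i_\alpha$ by tubing $F_0$ along $k_\alpha$ instead of using the exact sequence, then takes a taut representative in $Y$, uses the isometric embedding to get a taut surface in $Y(\alpha)$, and invokes uniqueness from Theorem~\ref{norm}(3). Your concern (c) resolves at once, since $\partial G\subset\partial Y\setminus T$ and $H_1(\partial Y\setminus T;\mathbb{R})$ is a direct summand of $H_1(\partial Y;\mathbb{R})$, so a subcollection of $\partial G$ null-homologous in $\partial Y(\alpha)$ would already violate tautness of $G$ in $Y$ (the paper asserts tautness in $Y(\alpha)$ without spelling out this check).
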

\begin{proof}

Since $\alpha\notin S_0$, $Y(\alpha)$ is a hyperbolic piece of $M(\alpha)$ by our choice of $S_0$.
If $M(\alpha)$   is fibered by a surface $F$ then $Y(\alpha)$ is fibered by $F_0$, a component of $F\cap Y$, by Lemma~\ref{topology1}. 
Since $\alpha\ne\lambda$, by Propositon~\ref{zero},  $[F]\cdot[k_\alpha]=0$. Since $$F\cap k_\alpha=(F\cap Y(\alpha))\cap k_\alpha,$$ 
it follows that $[F_0]\cdot[k_\alpha]=0$ in $Y(\alpha)$. 

As in Proposition~\ref{isometry}, there is a finite set $S_1\subset A_T$ with $|S_1|\le m$, such that $i_\alpha$ is an isometric embedding with respect to the Thurston norm for any $\alpha\notin S_1$. 
Now our conclusion follows from Lemma~\ref{avoid3}.
\end{proof}

\begin{lemma}\label{avoid3}Suppose that $Y$ is hyperbolic, and
$Y(\alpha)$ is fibered by $F_0$ with $[F_0]\cdot[k_\alpha]=0$. If $i_\alpha$ is an isometric embedding with respect to the Thurston norm, then $F_0$ can be isotoped to be disjoint from $k_\alpha$.
\end{lemma}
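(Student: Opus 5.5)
The plan is to show that the class $[F_0]$ lies in the image of $i_\alpha$, that it is a fiber class there, and then to use norm-minimization in $Y$ to produce a fiber disjoint from $k_\alpha$.

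\textbf{Step 1: $[F_0]$ lies in the image of $i_\alpha$.} Consider the long exact sequence of Lemma~\ref{injection}. The map $H_2(Y(\alpha),\partial Y(\alpha))\to H_2(Y(\alpha),Y)\cong H_2(D^2\times S^1,\partial)\cong\mathbb{R}$ records the algebraic intersection number with the core $k_\alpha$. This is visible by excision, since the generator is the meridian disk and it meets $k_\alpha$ once. The hypothesis $[F_0]\cdot[k_\alpha]=0$ therefore gives $[F_0]=i_\alpha(x)$ for some $x\in H_2(Y,\partial Y\setminus T;\mathbb{R})$. Since $[F_0]$ is integral and $i_\alpha$ is injective, $x$ is a rational class. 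After replacing $F_0$ by a parallel copy we may take $x$ integral, because the fiber of the multiple class is just parallel copies of $F_0$.

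\textbf{Step 2: choose a good representative in $Y$.} Represent $x$ by a taut (norm-minimizing) surface $G$ properly embedded in $(Y,\partial Y\setminus T)$, so that $G\cap T=\emptyset$. Such a surface exists because every nontorsion integral class in $Y$ has a taut representative. One must arrange $\partial G\cap T=\emptyset$ while keeping it norm-minimizing among surfaces in $H_2(Y,\partial Y)$ representing $(i_T)_*x$. Any representative in $(Y,\partial Y)$ of that class has $\partial$ null-homologous on $T$, so tubing or adding annuli along $T$ removes those boundary components without increasing $\chi_-$. Under the identification of $th_{Y\setminus T}$ as the restriction, this gives $\chi_-(G)=th_{Y\setminus T}(x)$.

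\textbf{Step 3: $G$ is isotopic to the fiber.} By the isometry assumption,
\[
\chi_-(G)=th_{Y\setminus T}(x)=th_{Y(\alpha)}(i_\alpha x)=th_{Y(\alpha)}([F_0]).
\]
So $G$, viewed in $Y(\alpha)$, is norm-minimizing in the fiber class $[F_0]$. I expect this to be the main obstacle: I must check that $G$ is taut in $Y(\alpha)$, in the sense that no subcollection of components is null-homologous, no torus component is homologous to a sphere, and no subcollection of boundary components is null-homologous. If some subcollection of components of $G$ becomes null-homologous in $Y(\alpha)$, discard it. This does not increase $\chi_-$, and since the norm is already minimal it keeps the class and the minimality; afterwards the remaining surface satisfies the tautness conditions. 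Spheres and disks do not occur by irreducibility and boundary incompressibility in hyperbolic $Y$. For the boundary condition, note that $\partial Y(\alpha)=\partial Y\setminus T$, so the boundary requirement is the same as in $Y$.

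\textbf{Step 4: conclude.} Theorem~\ref{norm}(3), applied to the hyperbolic manifold $Y(\alpha)$, says that a taut representative of a fiber class is isotopic to the fiber. Hence $F_0$ is isotopic to $G\subset Y$, and $G$ is disjoint from the filling solid torus and hence from $k_\alpha$. Therefore $F_0$ can be isotoped off $k_\alpha$, as the lemma claims.
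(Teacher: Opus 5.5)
Your proof is correct and follows the paper's argument. The paper gets a representative of $[F_0]$ in $Y$ by surgering $F_0$ near $k_\alpha$; your Step 1 is the homological version of this, and the exact sequence over $\mathbb{Z}$ already gives an integral preimage, so the parallel-copy step is unnecessary. From there both arguments take a taut surface in $H_2(Y,\partial Y\setminus T)$, use the isometry to see it is taut in $Y(\alpha)$, and invoke uniqueness of taut representatives in a fiber class; your Steps 2 and 3 spell out details the paper asserts without comment.
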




\begin{proof} Since $[F_0]\cdot[k_\alpha]=0$, we can surger $F_0$ to $F_0'$ in a tubular neighborhood of $k_\alpha$ such that $F_0$ and $F_0'$ are homologous in $Y(\alpha)$ relative to $\partial Y(\alpha)$,  and $F_0'\cap k_\alpha=\emptyset.$  Now we have $F_0'\subset Y$. 

We  choose a taut surface $F_{\min}'$ in the relative homology class $[F_0']\in H_2(Y,\partial Y\setminus T; \mathbb{R})$. In particular,
\[
\chi_-(F_{\min}')=th_{Y\setminus T}([F_{\min}']).
\]

Since $F_0$ is a surface fiber, by Theorem~\ref{norm}, $F_0$ is taut. So
\[
\chi_-(F_0)=th_{Y(\alpha)}([F_0]).
\]
Since
$$i_\alpha([F_{\min}'])=i_\alpha([F'])=[F_0]$$ 
and $i_\alpha$ is norm-preserving, we have that 
$$th_{Y\setminus T}([F_{\min}'])=th_{Y(\alpha)}([F_0]),$$ so 
$$\chi_-(F_{\min}')=\chi_-(F_0).$$

Therefore, $F_{\min}'$ is also a taut surface in $Y(\alpha)$. However, in the fiber class $[F_0]\in H_2(Y(\alpha),\partial Y(\alpha);\mathbb{R})$, the surface fiber $F_0$ is the unique taut surface up to isotopy. Hence $F_0$ is isotopic to $F_{\min}'$, which is disjoint from $k_\alpha$.
\end{proof}


\begin{lemma}\label{iff} Assume that both $Y$ and $Y(\alpha)$ are hyperbolic.
Suppose that
$(i_\alpha)_*$ is an isometric embedding.
 Then for any pair of rational homology classes $(x_1,x_2)$ lying in the same open face of $B_{Y\setminus T}$, $i_\alpha(x_1)$ is a rational fiber class if and only if $i_\alpha(x_2)$ is.
\end{lemma}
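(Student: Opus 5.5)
Let $P$ be the open face of $B_{Y\setminus T}$ containing $x_1$ and $x_2$, and suppose $i_\alpha(x_1)$ is a rational fiber class. The goal is to show that $i_\alpha(x_2)$ is one as well. Since $(i_\alpha)_*$ is linear, injective (Lemma~\ref{injection}) and norm-preserving, $i_\alpha(P)$ is a relatively open convex subset of the unit sphere $\partial B_{Y(\alpha)}$, of dimension $\beta_Y-1$, where $\beta_Y=\dim H_2(Y,\partial Y\setminus T;\mathbb R)$.

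First, I locate $i_\alpha(P)$ inside the cell structure of $\partial B_{Y(\alpha)}$. Since $th_{Y(\alpha)}\equiv 1$ on the convex set $i_\alpha(P)$, this set lies in a single closed face of $B_{Y(\alpha)}$: the segment between two points of norm $1$ lies in the sphere only when both points lie in a common closed face. More precisely, let $C$ be the smallest closed cell of $\partial B_{Y(\alpha)}$ containing $i_\alpha(P)$. Then the relatively open set $i_\alpha(P)$ meets the relative interior of $C$. I claim it is in fact contained in that relative interior, though not necessarily in a top-dimensional open face.

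This is the main obstacle. Theorem~\ref{norm}(2) is phrased for top-dimensional open faces, so I need fiberedness to be constant on all of $i_\alpha(P)$ even when $i_\alpha(P)$ lies in a lower-dimensional cell. I would handle this with the standard fact from Thurston's theory that the set of fiber classes is an open cone. A rational fiber class therefore lies in the interior of the cone over a fibered open face, together with a neighborhood of it.

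Here is how the argument would run. Since $i_\alpha(x_1)$ is a rational fiber class, there is a fibered open face $Q$ of $B_{Y(\alpha)}$ with $i_\alpha(x_1)/th_{Y(\alpha)}(i_\alpha(x_1))\in Q$. Now $Q$ is open in $\partial B_{Y(\alpha)}$, and the norm is identically $1$ on $i_\alpha(P)$. So $Q\cap i_\alpha(P)$ is a nonempty relatively open subset of $i_\alpha(P)$.

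Next, suppose some point $y\in i_\alpha(P)$ lies outside $Q$. The segment from $i_\alpha(x_1)$ to $y$ lies in $i_\alpha(P)$, so it has norm identically $1$ and is contained in a single closed face. A closed face containing a point of the open face $Q$ must be $\overline{Q}$. Hence the segment lies in $\overline{Q}$, and $y\in\partial Q$.

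Because $P$ is relatively open, the segment can be extended slightly beyond $y$ while staying inside $i_\alpha(P)$. The extended segment still lies in $\overline{Q}$ by the same argument. A straight segment in the convex polytope $\overline{Q}$ that starts in its interior $Q$ can only reach $\partial Q$ at its exit point, so it cannot be continued past $y$ inside $\overline{Q}$. This is a contradiction.

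Therefore $i_\alpha(P)\subset Q$. In particular $i_\alpha(x_2)$ lies in the cone over the fibered face $Q$. Since $x_2$ is rational and $i_\alpha$ is induced by an inclusion, $i_\alpha(x_2)$ is rational, so by Theorem~\ref{norm}(2) it is a rational fiber class. Swapping the roles of $x_1$ and $x_2$ gives the converse.

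I expect the delicate point to be the face-structure claim that a segment on the unit sphere through a point of an open face $Q$ stays in $\overline{Q}$. I would justify it using the supporting hyperplane of $Q$: the norm is linear on the cone over $\overline{Q}$, and any point of the unit sphere off $\overline{Q}$ would force the norm to be strictly less than $1$ somewhere in the interior of the segment.
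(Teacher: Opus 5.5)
Your proof is correct and is essentially the paper's. Both arguments use that $i_\alpha$ carries an open face of $B_{Y\setminus T}$ into the unit sphere of $th_{Y(\alpha)}$, so the face structure of the polytope $B_{Y(\alpha)}$ puts $i_\alpha(x_1)$ and $i_\alpha(x_2)$ in the relative interior of a common cell, and then conclude with Theorem~\ref{norm}(2). The paper takes the minimal cell containing a segment through $i_\alpha(x_1),i_\alpha(x_2)$ and splits into a top-dimensional versus lower-dimensional case; you instead fix the fibered face $Q$ and show by your segment-extension argument that the whole image of the open face lies in $Q$, which makes your appeal to openness of the fiber cone and the unproved claim about the cell $C$ unnecessary.
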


\begin{proof} 

Since $x_1, x_2$ lie in the same open face of $B_{Y\setminus T}$, there is a line segment $L\subset \partial B_{Y\setminus T}$ whose interior contains $x_1$ and $x_2$, and $th_{Y\setminus T}$ is $1$ on $L$. 

Since $(i_\alpha)_*$ is an isometric embedding, $th_{Y(\alpha)}$ is $1$ on $i_\alpha(L)$. So $i_\alpha(L)$ lies in $\partial B_{Y(\alpha)}$. Let $P\subset B_{Y(\alpha)}$ be the minimal dimensional cell containing $i_\alpha(L)$. 
Since $(i_\alpha)_*$ is linear, $i_\alpha(x_1)$ and $i_\alpha(x_2)$ are in the interior of $P$. If $P$ is of dimension $n-1$, then both $i_\alpha(x_1)$ and $i_\alpha(x_2)$
lie in the same open face, and by Theorem~\ref{norm}, $i_\alpha(x_1)$ is a rational fiber class if and only if $i_\alpha(x_2)$ is. If $P$ is of dimension $< n-1$, then none of $i_\alpha(x_1)$ and $i_\alpha(x_2)$ is a rational fiber class,
still by Theorem~\ref{norm}.
\end{proof}

\begin{proposition}\label{finiteness} Assume that both $Y$ and $Y(\alpha)$ are hyperbolic. Suppose also that
$(i_\alpha)_*$ is an isometric embedding. For each open face $P_i$ of 
$B_{Y\setminus T}$, let $v_i$ be its barycenter, $i=1,\dots,m$. 

 Then
 $Y(\alpha)$ has a surface fiber disjoint from the core $k_\alpha$ of the Dehn filling if and only if one of the $i_\alpha(v_i)$'s is a rational fiber class in $H_2(Y(\alpha),\partial Y(\alpha); \mathbb{R})$.
\end{proposition}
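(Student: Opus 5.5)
We prove the two directions separately.

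\emph{The ``if'' direction.} Suppose $i_\alpha(v_i)$ is a rational fiber class for some $i$. Since $v_i$ is rational by Theorem~\ref{norm}(1), some positive integer multiple $Nv_i$ is an integral class in $H_2(Y,\partial Y\setminus T;\mathbb{R})$ whose image $i_\alpha(Nv_i)$ is a fiber class.

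Choose a taut surface $F'\subset Y$ representing $Nv_i$. Its boundary misses $T$, so it is a proper surface in $Y(\alpha)$ disjoint from $k_\alpha$. Since $(i_\alpha)_*$ is an isometric embedding,
\[
\chi_-(F')=th_{Y\setminus T}(Nv_i)=th_{Y(\alpha)}(i_\alpha(Nv_i)),
\]
so $F'$ is norm-minimizing in $Y(\alpha)$. The other tautness conditions should persist in $Y(\alpha)$. A union of components of $F'$ that is null-homologous in $H_2(Y(\alpha),\partial Y(\alpha))$ would be null-homologous in $H_2(Y,\partial Y\setminus T)$ by the injectivity in Lemma~\ref{injection}. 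The boundary condition only involves $\partial Y\setminus T=\partial Y(\alpha)$, so it carries over. If the inherited conditions need adjusting, we discard redundant components, which keeps the surface norm-minimizing.

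Theorem~\ref{norm}(3) then says $F'$ is isotopic to the fiber, and $F'$ already misses $k_\alpha$. This is the same argument as in Lemma~\ref{avoid3}.

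\emph{The ``only if'' direction.} Let $F_0$ be a fiber of $Y(\alpha)$ disjoint from $k_\alpha$. Then $F_0\subset Y$ and $[F_0]=i_\alpha(x)$ for a nonzero integral class $x\in H_2(Y,\partial Y\setminus T)$; here $x\neq 0$ by injectivity.

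Normalize $x$ to $\hat x=x/th_{Y\setminus T}(x)\in\partial B_{Y\setminus T}$. We have $th_{Y\setminus T}(x)>0$ because $th_{Y\setminus T}$ is a genuine norm on $H_2(Y,\partial Y)$ restricted via Lemma~\ref{injection}. The normalized class $\hat x$ lies in the relative interior of a unique cell $C$ of $\partial B_{Y\setminus T}$.

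\emph{Case 1: $C$ is an open face $P_i$.} Then $\hat x$ and $v_i$ are rational classes in the same open face, so Lemma~\ref{iff} shows that $i_\alpha(v_i)$ is a rational fiber class, as required.

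\emph{Case 2: $C$ is a lower-dimensional cell.} This is the main obstacle. Note that $i_\alpha(\hat x)$ is a rational fiber class, so it lies in the open cone over a fibered face $Q$ of $B_{Y(\alpha)}$, and it has norm $1$, so $i_\alpha(\hat x)\in Q$. The fibered cone is open, so all classes near $i_\alpha(\hat x)$ are rational fiber classes.

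Because $i_\alpha$ is linear and continuous, we can pick a rational point $y$ near $\hat x$ in an open face $P_j$ adjacent to $C$. Then $i_\alpha(y/th(y))$ has norm $1$ by isometry. For $y$ close enough, $i_\alpha$ of the ray through $y$ stays within the open cone over $Q$, so $i_\alpha(y)$ is a rational fiber class.

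Lemma~\ref{iff} then transfers this from $y$ to $v_j$. Rationality of $y$ can be arranged since rational points are dense in $P_j$, which is a rational polytope by Theorem~\ref{norm}(1).

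The key point to verify here is the openness of the fibered cone in $H_2(Y(\alpha),\partial Y(\alpha);\mathbb{R})$. It holds because $Q$ is an open top-dimensional face, together with continuity of $i_\alpha$. This also works in the codimension-$1$ case of Lemma~\ref{injection}: the preimage of an open set is open.
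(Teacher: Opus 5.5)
Your proof is correct, but the ``only if'' direction takes a different route from the paper in its key step. The ``if'' direction matches the paper's argument: a taut representative of a multiple of $v_i$ stays taut in $Y(\alpha)$ by the isometry, so it is the fiber by Theorem~\ref{norm}(3). Your check that the non-norm tautness conditions survive is more careful than the paper's one-line assertion; it uses injectivity of $(i_\alpha)_*$ and $\partial Y(\alpha)=\partial Y\setminus T$.

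The difference is in how you handle a normalized class $\hat x$ lying on a lower-dimensional cell.

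\emph{The paper's route.} It proves this case never occurs. The point $i_\alpha(\hat x)$ lies in the interior of a top-dimensional face $P$ of $B_{Y(\alpha)}$. The image $H$ of $(i_\alpha)_*$ is either the whole space or a hyperplane through the origin. In the hyperplane case, $H$ passes through an interior point of $P$ and is transverse to the affine hull of $P$. Hence $P\cap H$ is a top-dimensional face of $H\cap B_{Y(\alpha)}=i_\alpha(B_{Y\setminus T})$, so $\hat x$ itself lies in an open face.

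\emph{Your route.} You never decide whether that case occurs; you perturb instead. The preimage under $i_\alpha$ of the open cone over the fibered face is an open neighborhood of $\hat x$. It therefore contains a rational point $y$ of some open face $P_j$, because open faces are dense in $\partial B_{Y\setminus T}$ and rational points are dense in each rational polytope face. Then $i_\alpha(y)$ is a rational fiber class, and Lemma~\ref{iff} transfers this to $v_j$.

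\emph{What each buys.} Your argument uses only openness of fibered cones, continuity of $i_\alpha$, and density of rational points. It treats the codimension-$0$ and codimension-$1$ cases uniformly with no face-dimension bookkeeping, and it makes your Case~1 redundant (take $y=\hat x$). The paper's argument gives the sharper structural fact that the normalized class of every fiber avoiding $k_\alpha$ already lies in an open face of $B_{Y\setminus T}$.
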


\begin{proof}The ``only if" part:
Suppose that $Y(\alpha)$ has a surface fiber disjoint from the core $k_\alpha$ of the Dehn filling, then there is a class $y\in H_2(Y,\partial Y\setminus T; \mathbb{R})$ such that $i_\alpha(y)$ is a fiber class in $H_2(Y(\alpha))$. Let $x=y/th_{Y\setminus T}(y)$, then $th_{Y\setminus T}(x)=1$.

{\bf Claim.} $x$ is in an open face of $B_{Y\setminus T}$.

Assuming the claim, by Lemma~\ref{iff} we get $i_\alpha(v_i)$ is also a rational fiber class, where $v_i$ is the barrycenter of the open face containing $x$. This finishes the proof of the ``only if" part of Proposition~\ref{finiteness}.

The remaining is to prove the Claim. Since $i_\alpha(x)$ is a rational fiber class, $i_\alpha(x)$ lies in an open face $P\subset B_{Y(\alpha)}$ by Theorem~\ref{norm}. Let $H=i_\alpha(H_2(Y, \partial Y\setminus T); \mathbb{R})$ and $$P'=P\cap H.$$
Notice that either  $H=H_2(Y(\alpha),\partial Y(\alpha);\mathbb{R})$, or $H$ is a hyperplane in $H_2(Y(\alpha),\partial Y(\alpha); \mathbb{R})$
by Proposition~\ref{isometry}.

If $H=H_2(Y(\alpha),\partial Y(\alpha); \mathbb{R})$, then $i_\alpha$ is an isometry, and the Claim is obvious. 

If $H$ is a hyperplane in $H_2(Y(\alpha),\partial Y(\alpha); \mathbb{R})$,
then $$H\cap B_{Y(\alpha)}=i_\alpha(B_{Y\setminus T}),  \text{ and } P'\subset \partial (i_\alpha(B_{Y\setminus T})).$$

Since the supporting hyperplane of the face $P$ does not contain the origin, and $H$ contains the origin and the interior point $i_\alpha(x)$
of $P$, for $P'=P\cap H$, we have 
$$\dim P'=\dim P-1,$$
that is, $P'$ is an open face in $i_\alpha (B_{Y\setminus T})$. So $i_\alpha^{-1}(P')$ is an open face in $B_{Y\setminus T}$ containing $x$. This completes the proof of the claim.

The ``if" part: 
Suppose that there exists $i\in\{1,...,m\}$ such that $i_\alpha(v_i)$ is a rational fiber class in $H_2(Y(\alpha),\partial Y(\alpha);\mathbb{R})$. Take $r\in\mathbb{Q}\setminus \{0\}$ such that $rv_i$ is a primitive integer homology class in $H_2(Y,\partial Y\setminus T;\mathbb{R})$. The following argument is similar  to the proof of Lemma \ref{avoid3}. We choose a taut surface $F_{\rm min}\subset Y$  representing $rv_i$. Then $F_{\rm min}\cap T=\emptyset$ and
$$\chi_-(F_{\rm min})=th_{Y\setminus T}(rv_i).$$
Since the inclusion $i_\alpha: Y\to Y(\alpha)$ induces an isometric embedding
$$(i_\alpha)_*: H_2(Y,\partial Y\setminus T;\mathbb{R})\to H_2(Y(\alpha),\partial Y(\alpha);\mathbb{R})$$
with respect to the Thurston norm, we have $$th_{Y(\alpha)}(i_\alpha(rv_i))=th_{Y\setminus T}(rv_i).$$ Thus $$\chi_-(F_{\rm min})=th_{Y(\alpha)}(i_\alpha(rv_i)).$$ Then $F_{\rm min}$ is also a taut surface in $Y(\alpha)$. Note that $i_\alpha(rv_i)$ is a fiber class. By Theorem \ref{norm}, $F_{\rm min}$ is a surface fiber in $Y(\alpha)$. Note that $F_{\rm min}\cap T=\emptyset$, thus $F_{\rm min}$ is disjoint from the core $k_\alpha$ of the Dehn filling. This finishes the proof of the ``if" part of Proposition \ref{finiteness}.
\end{proof}


\begin{proof}[Proof of Proposition~\ref{prop:FiniteSlopes}] We only consider the slopes $\alpha$ in $A_T\setminus (\{\lambda\}\cup S_0\cup S_1)$. Recall that the 
$S_1$ in Proposition~\ref{avoiding1} is the one given in Proposition~\ref{isometry}. For each such slope $\alpha$, by Proposition~\ref{isometry},
$i_\alpha$ is an isometric embedding.
By Proposition~\ref{finiteness}, 
there are finitely many classes $$v_1,...,v_m\in H_2(Y,\partial Y\setminus T;\mathbb R)$$ such that
 $Y(\alpha)$ has a surface fiber disjoint from the core $k_\alpha$ of the Dehn filling if and only if one of $i_\alpha(v_1),...,i_\alpha(v_m)$ is a rational fiber class in $H_2(Y(\alpha),\partial Y(\alpha);\mathbb R)$.
For each $i$, $1\le i \le m$, let 
$$S'_i=\{\alpha\in A_T\setminus (\{\lambda\}\cup S_0\cup S_1)| \text{$i_\alpha(v_i)$ is a rational fiber class in $H_2(Y(\alpha),\partial Y(\alpha);\mathbb R)$}\}.$$

{\bf Claim.} For each $i$, either $|S'_i|\le2$, or $S'_i=S(\beta_i,1)$ for a slope $\beta_i$. In the latter case, for each $\alpha\in S'_i$, we can isotope $k_{\alpha}$ so that it is contained in a fiber of $M(\alpha)$, and $\beta_i$ is the surface framing induced from the fiber.

Assume the claim, let $S_2$ be the union of those $S'_i$ with $|S'_i|\le2$, and let $R$ consist of all $\beta_i$'s with $S'_i=S(\beta_i,1)$. Then $\frac12|S_2|+|R|\le m$, and for any $\alpha\notin S_2\cup S(R,1)$,  $Y(\alpha)$ has no surface fiber disjoint from $k_\alpha$ by Proposition~\ref{finiteness}, since none of $i_\alpha(v_i)$ is a rational fiber class. When $\alpha\in S(\beta_i,1)$, by the Claim $k_{\alpha}$ can be isotoped to be contained in a fiber of $M(\alpha)$. It is a standard fact that, for any other $\alpha'\in S(\beta_i,1)$, $M(\alpha')$ can be obtained from $M(\alpha)$ by cutting open along the fiber then regluing via a power of the Dehn twist along $k_{\alpha}$.
We then get the conclusion of Proposition~\ref{prop:FiniteSlopes}.

The remaining is to prove the Claim. Without loss of generality, assume $S'_i\ne \emptyset$. Given $\alpha\in S'_i$, $Y(\alpha)$ has a connected surface fiber $F_\alpha$ realizing a multiple of the rational fiber class $i_\alpha(v_i)\in H_2(Y(\alpha),\partial Y(\alpha);\mathbb{R})$ and avoiding $k_\alpha$.

Now we argue $F_\alpha$ is independent of $\alpha\in S'_i$ up to isotopy.  
Recall $Y(\alpha)=Y\cup N(k_\alpha)$, and $F_\alpha\cap k_\alpha=\emptyset$ is equivalent to (after an isotopy) $F_\alpha\subset Y$.
Let $F_i\subset Y$ be a taut surface representing a multiple of the class $v_i$. Then $F_i$ is also a taut surface in $Y(\alpha)$ representing a multiple of $i_\alpha(v_i)$. 
Since $F_\alpha$ is a fiber surface, $F_i$ consists of several parallel copies of $F_\alpha$ up to isotopy. We can replace $F_i$ with a component of it if necessary. 

Now for every $\alpha\in S_i'$, we have 
$$F:=F_i=F_\alpha\subset Y \subset Y(\alpha)$$ 
and 
$$F\times [0,1]=Y(\alpha)\bbslash F=(Y\bbslash F)\cup N(k_\alpha),$$
where we consider $k_\alpha$ as a knot in $F\times [0,1]$.
Fix an $\alpha_0\in S_i'$. If $|S_i'|>1$,
suppose $\alpha\ne \alpha_0$ is any other element in $S_i'$. Then $Y(\alpha)$ is a non-trivial surgery on $Y(\alpha_0)$ along $k_{\alpha_0}$. Then $Y(\alpha)\bbslash F$ is also a non-trivial surgery on $Y(\alpha_0)\bbslash F$ along $k_{\alpha_0}$, and we have 
 \[(Y(\alpha)\bbslash F,\partial Y(\alpha)\bbslash\partial F)\cong(F\times [0,1],\partial F\times[0,1])\cong(Y(\alpha_0)\bbslash F,\partial Y(\alpha_0)\bbslash\partial F).\] 
 By Theorem~\ref{Ni}, one can isotope $k_{\alpha_0}$ such that its image of $k_{\alpha_0}$ in $F$ under the natural projection
 $p: Y(\alpha_0)\bbslash F\cong F\times [0,1]\rightarrow F$ has at most one crossing. Moreover,  if $k_\alpha$ has been isotoped so that the projection above has the minimum number of crossings, then 
 \begin{enumerate}
 \item
If the projection above
  has one crossing, then the surgery slope on $k_{\alpha_0}$ to obtain $Y(\alpha)\bbslash F$  is $0$ with respect to the ``blackboard framing".
 \item If  the projection above has no crossing, then the surgery slope on $k_{\alpha_0}$ to obtain $Y(\alpha)\bbslash F$  is $1/n$ with respect to the ``blackboard framing".
 \end{enumerate}
 In the former case, $|S_i'|=2$. In the latter case, $k_{\alpha_0}$ can be isotoped to be a simple closed curve in $F$, and
 $S_i'=S(\beta_i,1)$, where $\beta_i$ is the surface framing. The claim is proved.
 
This completes the proof of Proposition \ref{prop:FiniteSlopes}.
\end{proof}

\section{The case of Seifert boundary piece}

We state Theorem \ref{main1} when $Y$ is a Seifert piece as follows.

\begin{proposition} \label{Seifert} Suppose that $Y$ is Seifert fibered. Then the set $\{\alpha\in A_T| M(\alpha) \text{ is fibered}\}$ is sparse.
\end{proposition}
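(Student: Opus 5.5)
The plan is to show that, outside a sparse set of slopes, $M(\alpha)$ being fibered forces the core $k_\alpha$ to meet a fiber with nonzero algebraic intersection, which contradicts Proposition~\ref{zero}.

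\textbf{Reductions.} First dispose of $M=N(K)$. Its Dehn fillings are well known: every filling is either $\mathbb{RP}^3\#\mathbb{RP}^3$, which is not prime and hence not fibered, or a Seifert manifold over $S^2$ with cone points of orders $(2,2,n)$. The latter have finite $\pi_1$ except for finitely many slopes, and a closed manifold with finite $\pi_1$ can be fibered only if it is $S^2\times S^1$, which never occurs here. So in this case the fibered slopes form a finite set. From now on assume $M\neq N(K)$ and restrict to slopes $\alpha\notin S_0\cup\{\lambda\}$, with $S_0$ the sparse set of Lemma~\ref{JSJ}; this is harmless by Lemma~\ref{sparse1}. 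For such $\alpha$, $M(\alpha)$ is irreducible with JSJ tori $\mathcal T$, and $Y(\alpha)$ is a Seifert piece extending the fibration of $Y$ with $\chi(O(Y(\alpha)))<0$. Moreover $k_\alpha$ is a (possibly exceptional) Seifert fiber of $Y(\alpha)$, of multiplicity $p=\Delta(\alpha,f_0)\ge 1$.

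\textbf{Main step.} Suppose $M(\alpha)$ is fibered by $F$. Since $M(\alpha)$ is irreducible, $F$ is not a sphere. By Lemma~\ref{topology1}, after an isotopy of $F$, a component $F'$ of $F\cap Y(\alpha)$ is a fiber of $Y(\alpha)$. Looking at the proof of that lemma, $Y(\alpha)=\mathcal M(F',\psi)$ with $\psi$ periodic, since $Y(\alpha)$ is a Seifert piece.

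The mapping torus of a periodic map carries a Seifert fibration in which $F'$ is horizontal, i.e.\ transverse to all fibers. Because $\chi(O(Y(\alpha)))<0$, the Seifert fibration of $Y(\alpha)$ is unique up to isotopy. Hence $F'$ is horizontal for the fibration extending that of $Y$, and in particular transverse to $k_\alpha$. A horizontal surface meets every Seifert fiber transversely with all intersection points of the same sign. So $F'\cap k_\alpha\neq\emptyset$, and every point of it counts with the same sign.

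To pass to $F$, use the same bookkeeping as in Proposition~\ref{avoiding1}. All components of $F\cap Y(\alpha)$ are parallel copies of fibers of $Y(\alpha)$, consistently oriented because $F$ is a fiber of a fibration. Therefore
\[
[F]\cdot[k_\alpha]=\sum [F\cap Y(\alpha)]\cdot[k_\alpha]\neq 0 .
\]
Since $\alpha\neq\lambda$, this contradicts Proposition~\ref{zero}(2). So no slope outside $S_0\cup\{\lambda\}$ gives a fibered $M(\alpha)$, and the fibered set is contained in the sparse set $S_0\cup\{\lambda\}$.

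\textbf{Main obstacle.} The step needing the most care is justifying that the fiber $F'$ is horizontal with respect to the specific Seifert fibration in which $k_\alpha$ is a fiber, rather than vertical. This is where $\chi(O(Y(\alpha)))<0$ and uniqueness of the Seifert fibration are used. A vertical fiber would be a torus or annulus, which is ruled out by the periodic-monodromy description combined with that uniqueness. A second point to verify is that the orientations of the pieces of $F\cap Y(\alpha)$ agree, so that their intersections with $k_\alpha$ add rather than cancel; this follows because they are all fibers of the same bundle structure transverse to the flow. The case $\mathcal T=\emptyset$, where $Y=M$ is closed Seifert after filling, runs by the same argument.
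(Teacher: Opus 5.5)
Your proof is correct and follows essentially the paper's argument: Lemma~\ref{JSJ} and Lemma~\ref{topology1} produce a fiber of the Seifert piece $Y(\alpha)$, this fiber has nonzero algebraic intersection with a Seifert fiber, and that contradicts Proposition~\ref{zero}. The small differences are that you get the nonzero intersection from horizontality plus uniqueness of the fibration where the paper uses Lemma~\ref{topology2} via the cover $F\times S^1\to Y$, and you pair with $k_\alpha$ via Proposition~\ref{zero}(2) where the paper pairs with $f_0$ via Proposition~\ref{zero}(1). For $M=N(K)$, the paper simply notes $b_1(N(K)(\alpha))=0$ for $\alpha\ne\lambda$; your classification route also works, but your sentence about $S^2\times S^1$ is garbled. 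A closed fibered manifold has $b_1\ge 1$, so finite $\pi_1$ rules out fibering outright, and $S^2\times S^1$ does occur, namely as $N(K)(\lambda)$, which is just one of your finitely many excluded slopes.
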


\begin{lemma}\label{topology2}
Suppose that $Y$ is a Seifert fibered $3$-manifold over a base orbifold with negative orbifold Euler characteristic. Let $f_0$ be a Seifert fiber of $Y$. 
If $Y$ is fibered by a surface $F$, then $[F]\cdot [f_0] \ne 0$. 
\end{lemma}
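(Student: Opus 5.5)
We argue by contradiction: assuming $[F]\cdot[f_0]=0$, we show that the fibration by $F$ forces the base orbifold to have nonnegative Euler characteristic.

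Write $Y=\mathcal M(F,\phi)$. Since $\chi(O(Y))<0$, $Y$ is irreducible and $F$ is incompressible and not a sphere or disk. It is also not a torus or annulus: otherwise $Y$ would be a torus or annulus bundle over $S^1$, and such Seifert manifolds have base orbifold with $\chi\ge 0$. By standard results on incompressible surfaces in Seifert fibered spaces (Waldhausen), $F$ can be isotoped to be either vertical (a union of fibers) or horizontal (transverse to all fibers). If $F$ is horizontal, then $f_0$ meets $F$ transversely, always with the same sign, so $[F]\cdot[f_0]\neq 0$ and we are done.

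It remains to rule out the vertical case. A vertical incompressible surface is a union of fibers over an essential arc or curve $c$ in $O(Y)$, so it is an annulus or a torus (or a Möbius band or Klein bottle, excluded by orientability and two-sidedness). This contradicts the previous paragraph.

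The order of steps is therefore: (1) exclude $\chi(F)\ge 0$ using $\chi(O(Y))<0$; (2) invoke the horizontal/vertical dichotomy; (3) conclude in the horizontal case by positivity of transverse intersections. The main point needing care is step (2) when $\partial Y\neq\emptyset$, i.e.\ for properly embedded surfaces. Here one uses that a fiber $F$ is incompressible and $\partial$-incompressible, and that $\partial Y$ consists of tori, which the standard theorem covers.

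As an alternative to (2), one can argue with $\pi_1$ directly. The class of $f_0$ generates a normal infinite cyclic subgroup of $\pi_1(Y)$, which is central in the orientable base case. If $[F]\cdot[f_0]=0$, then $f_0$ lifts into $\pi_1(F)$, the kernel of the fibration map $\pi_1(Y)\to\mathbb Z$. A nontrivial normal cyclic subgroup of $\pi_1(F)$ exists only if $F$ is a torus or annulus (or a Klein bottle or Möbius band), which is again excluded. This version is cleaner, and I would present it, handling the non-central case by passing to an index-$2$ subgroup.
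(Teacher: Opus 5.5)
Your proposal is correct, and it argues differently from the paper. The paper writes $Y=\mathcal M(F,\phi)$ with $\phi$ periodic and $\phi^n=\mathrm{id}$. It passes to the $n$-fold cyclic cover $p\colon F\times S^1\to Y$, which carries $F$-fibers to $F$-fibers and circle fibers to Seifert fibers. It then uses $[p^{-1}(F)]\cdot[p^{-1}(f_0)]=n[F]\cdot[f_0]$, where the left side is visibly nonzero in the product.

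That argument is short and concrete, but it implicitly uses two facts it does not spell out:
\begin{itemize}
\item the monodromy of a fibration of such a Seifert manifold is isotopic to a periodic map;
\item the circle fibration of the periodic mapping torus agrees with the given Seifert fibration. This is where uniqueness of the Seifert fibration, i.e.\ $\chi(O(Y))<0$, enters.
\end{itemize}

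Your $\pi_1$ version avoids both. It needs only two things: the regular fiber generates a normal infinite cyclic subgroup of $\pi_1(Y)$, and the fibration homomorphism $\pi_1(Y)\to\mathbb Z$ is $\gamma\mapsto[F]\cdot\gamma$. The hypothesis on $O(Y)$ is then used only to force $\chi(F)<0$.

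Three small remarks.
\begin{itemize}
\item The index-$2$ detour for the non-central case is unnecessary. Normality of the fiber subgroup in $\pi_1(Y)$ already gives normality in $\pi_1(F)$, and a free group of rank at least $2$ or a hyperbolic surface group has no nontrivial normal cyclic subgroup.
\item Take $f_0$ to be a regular fiber. An exceptional fiber does not generate a normal subgroup, but a power of it is a regular fiber, so the vanishing or non-vanishing of the intersection number is unaffected.
\item In the Waldhausen version, ``always with the same sign'' deserves one sentence. It holds because $F$ is a non-separating fiber, so cutting $Y$ along $F$ gives $F\times I$, and the $I$-bundle structure induced by the Seifert fibration is the trivial one. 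For a separating horizontal surface the signs would alternate.
\end{itemize}
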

\begin{proof} Suppose that $Y=\mathcal M(F, \phi)$, where $\phi$ is a periodic map on $F$. Then $\phi^n=\mathrm{id}$ for some integer  $n$.
So we have the cyclic covering \[p: F\times S^1=\mathcal M(F, \mathrm{id})\to Y=\mathcal M(F, \phi),\] which preserves both $F$-fibers and $S^1$-fibers.
Orient $Y$, and $F, f_0$ in $Y$, and lift those orientations to their pre-images under $p$.
We have $$[p^{-1}(F)]\cdot [p^{-1}(f_0)]=n [F]\cdot [f_0].$$
It is clear the left hand side is non-zero. So $[F]\cdot [f_0]\ne 0$.
\end{proof}

\begin{corollary}\label{Seifert1} Suppose that $M\ne N(K)$.
If $M$ contains a Seifert fibered piece $M_i$ whose $S^1$ fiber $f_0$ represents zero in $H_1(M; \mathbb R)$, then $M$ is not fibered.
\end{corollary}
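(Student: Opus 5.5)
We argue by contradiction. Suppose $M$ is fibered by a surface $F$. First dispose of the degenerate possibility that $F$ is a $2$-sphere: then $M$ would be $S^2\times S^1$, which has no Seifert piece with hyperbolic-type base and contradicts $M\ne N(K)$ together with $M$ being irreducible with incompressible boundary. So $F$ is not a sphere. By Lemma~\ref{topology1}, after isotoping $F$, the Seifert piece $M_i$ is fibered by a surface $F'_i$, which is a component of $F\cap M_i$.

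Next we apply Lemma~\ref{topology2} to $M_i$. For this we need the base orbifold of $M_i$ to have negative orbifold Euler characteristic. Since $M\ne N(K)$ and $M_i$ is a JSJ Seifert piece of a manifold with nonempty (torus) boundary, the only Seifert pieces with $\chi\ge 0$ base are $T^2\times I$ and $N(K)$. A JSJ piece cannot be $T^2\times I$. It can be $N(K)$ only if $M=N(K)$, because $N(K)$ has a single boundary torus and $M$ is connected, so $M_i=N(K)$ forces $M=M_i$. (This is the same observation used in Case 2 of Lemma~\ref{JSJ}.) Thus Lemma~\ref{topology2} gives $[F'_i]\cdot[f_0]\ne 0$, with the intersection computed in $M_i$.

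Then we pass from $M_i$ to $M$. We push $f_0$ into the interior of $M_i$. Every point of $F\cap f_0$ lies in $F\cap M_i$, which is a union of components $F'_{i,1},\dots,F'_{i,n}$. From the proof of Lemma~\ref{topology1}, these components are exactly the components of $F_i=F\cap M_i$, all isotopic within $M_i$ to $F'_i$ and cyclically permuted by the monodromy. All of them are fibers of the same fibration of $M_i$, so they are parallel and oriented coherently, with orientations induced from the single transverse orientation of the fibration of $M$. Hence
\[
[F]\cdot[f_0]=\sum_j [F'_{i,j}]\cdot[f_0]=n\,[F'_i]\cdot[f_0]\ne 0 .
\]
On the other hand, $[F]\cdot[f_0]$ depends only on $[f_0]\in H_1(M;\mathbb R)$, because it is the pairing of $[F]\in H_2(M,\partial M)$ with $H_1(M)$. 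By hypothesis $[f_0]=0$, so $[F]\cdot[f_0]=0$, a contradiction.

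The main obstacle is the coherence of orientations in the last step. We need that the intersection numbers of $f_0$ with the various components of $F\cap M_i$ all have the same sign. First, each component is a fiber of the restricted fibration of $M_i$, with the normal direction given by the circle direction of $M$. Second, $f_0$ is isotopic within $M_i$ into a single component's neighborhood pattern. So each term equals $[F'_i]\cdot[f_0]$, computed with the same orientation conventions as in the covering argument of Lemma~\ref{topology2}.
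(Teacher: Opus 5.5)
Your proof follows the paper's route exactly: Lemma~\ref{topology1} fibers $M_i$, Lemma~\ref{topology2} gives $[F'_i]\cdot[f_0]\neq 0$, and passing to $M$ contradicts $[f_0]=0$. Your last step is correct, and more careful than the paper's one-word ``hence''. The components of $F\cap M_i$ are $n$ parallel, coherently cooriented fibers of the connected fibration of $M_i$, so $[F]\cdot[f_0]=n\,[F'_i]\cdot[f_0]$. The sentence about $f_0$ being isotopic ``into a single component's neighborhood pattern'' adds nothing and can be dropped.

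\textbf{The gap.} The step that does not hold is your derivation of $\chi(O(M_i))<0$ from $M\neq N(K)$. The inference ``$N(K)$ has a single boundary torus and $M$ is connected, so $M_i=N(K)$ forces $M=M_i$'' is valid only when that torus is $\partial M$. So it works only for the piece $Y$ containing $\partial M$, which is why it works in Case~2 of Lemma~\ref{JSJ}. An arbitrary Seifert piece can be a copy of $N(K)$ whose boundary is a JSJ torus glued to another piece, for instance a hyperbolic one.

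\textbf{Why no argument can close it.} The statement as written actually fails in that case. Take the Seifert fibration of $N(K)$ over the M\"obius band. Its regular fiber $c$ has order $2$ in $H_1(N(K);\mathbb Z)\cong\mathbb Z\oplus\mathbb Z/2$, and it lies in the annulus fibers of $N(K)\to S^1$. So the conclusion of Lemma~\ref{topology2} is false for this piece. Now glue $N(K)$ to a fibered two-cusped hyperbolic manifold whose fiber meets the gluing cusp in two coherently oriented curves, matched to two copies of $c$. The result is a fibered $M\neq N(K)$ containing a Seifert piece whose fiber vanishes in $H_1(M;\mathbb R)$. There is a second failure mode for closed $M$, which matters because the corollary is applied to the closed manifold $M(\alpha)$. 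There $M_i=M$ could be the Nil torus bundle whose monodromy is a Dehn twist: it is fibered, and its Seifert fiber is null-homologous.

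\textbf{Relation to the paper and the fix.} The paper's proof contains the same unjustified sentence: ``Since $M\ne N(K)$, $M_i\ne N(K)$ is Seifert fibered over a base orbifold with negative orbifold Euler characteristic''. So what both arguments actually prove is the corollary under the added hypothesis $\chi(O(M_i))<0$. That is all Proposition~\ref{Seifert} needs. There the piece is $Y(\alpha)$, and $\chi(O(Y(\alpha)))<0$ for $\alpha\notin S_0$ is part of Lemma~\ref{JSJ}. Replace your $N(K)$ argument by that hypothesis and your proof is complete.
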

\begin{proof} 
Suppose that $M$ is fibered by a surface $F$,  then $M_i$ is fibered by a surface $F_i$ by Lemma~\ref{topology1}, where $F_i$ is a component of $F\cap M_i$. Since $M\ne N(K)$, $M_i\ne N(K)$ is Seifert fibered over a base orbifold with negative orbifold Euler characteristic. By 
Lemma~\ref{topology2}, $[F_i]\cdot [f_0] \ne 0$, hence  $[F]\cdot [f_0] \ne 0$, which implies that   $[f_0]\ne 0$ in $H_1(M; \mathbb R)$. This is a contradiction.
\end{proof}

\begin{proof}[Proof of Proposition \ref{Seifert}]
The proof is divided into two cases:

(1) $M\ne N(K)$:
Let $f_0\subset T=\partial M\subset Y$ be the $S^1$-fiber of $Y$. 

 By Lemma~\ref{JSJ}, there is a sparse set $S_0\subset A_T$ such that for each $\alpha \in A_T\setminus S_0$,
  $M(\alpha)$ is still irreducible, and $Y(\alpha)$ is still a Seifert fibered piece with $S^1$ fiber $f_0$.
  Below we assume that  $\alpha \in A_T\setminus S_0$. 

For any slope $\alpha\ne\lambda$ on $T$,
$[f_0]=0$ in $H_1(M(\alpha);  \mathbb R)$ by Proposition~\ref{zero}.
  Then by Corollary~\ref{Seifert1}, $M(\alpha)$ is not fibered.

 (2) $M=N(K)$: In this case, $b_1(N(K))=1$. So 
 $b_1(N(K)(\alpha))=0$ for $\alpha\ne\lambda$. Hence $N(K)(\alpha)$ can not be a surface bundle over $S^1$ for $\alpha\ne\lambda$.

  We have finished the proof of Proposition~\ref{Seifert}.
\end{proof}
 \begin{proof}[Proof of Theorem \ref{main1}]
 It follows from Proposition \ref{hyperbolic} and Proposition \ref{Seifert}.
 \end{proof}
\section{Proof of the corollary}

\begin{proof}[Proof of Corollary \ref{surgery}]
Let $X$ be a closed orientable 3-manifold  and $k$ be a knot in $X$. Let  $M=X\setminus N(k).$
Since $\partial M$ is a torus, either $M$ is irreducible or $M$ is a nontrivial connected sum.

Suppose first that $M$ is irreducible. Then either $M$ is a solid torus or $M$ has incompressible boundary. In the former case, $X(k,\alpha)$ is covered by $S^3$ for all but one $\alpha$, and the conclusion follows. In the latter case, the conclusion follows from Theorem~\ref{main1}.

If $M$ is not irreducible, then $M=X_1\# M'$, where $X_1\ne S^3$ is a closed orientable 3-manifold and $M'$ is a compact 3-manifold
with torus boundary, and 
$$X(k, \alpha)=M(\alpha)=X_1\# M'(\alpha).$$
If $X(k, \alpha)$ is fibered, then $X(k, \alpha)$ is prime, so $M'(\alpha)$ is $S^3$, that is $M'=S^3\setminus N(k')$
for a knot $k'$ in $S^3$. If $k'$ is knotted, then  $\alpha$ is unique by \cite{GL}. If $k'$ is unknotted, then $M'$ is the solid torus,
and $\alpha=1/n$, $n\in \mathbb Z$. Then $\alpha\in S(\beta,1)$ where $\beta$ is the meridian curve of the solid torus $M'$. By Remark \ref{sparse2}, $S(\beta,1)$ is a sparse set. 

So $X(k, \alpha)$ is non-fibered for all but a sparse set of $\alpha$.  This completes the proof of the corollary.
\end{proof}

\bibliographystyle{amsalpha}

\begin{thebibliography}{GWWZ}




\bibitem[FM]{FM} B. Farb, D. Margalit, {\it A primer on mapping class groups.} Princeton Mathematical Series, 49. Princeton University Press, Princeton, NJ, 2012.


\bibitem[Ga]{Ga} D. Gabai, {\it Foliations and the topology of 3-manifolds, II,}  J. Differ. Geom. 26 (1987), 461-478.


\bibitem[GL]{GL} C. Gordon and J. Luecke, {\it Knots are determined by their complements}, J. Amer. Math. Soc. 2 (1989)
371-415.


\bibitem[Ha]{Ha} A. Hatcher, {\it Notes on basic $3$-manifold topology}. \url{http://pi.math.cornell.edu/~hatcher/.}




\bibitem[La]{La} M. Lackenby,  {\it Dehn surgery on knots in 3-manifolds.}
J. Amer. Math. Soc. 10 (1997), no. 4, 835-864.

\bibitem[LMST]{LMST}
C. Leininger, Y. N. Minsky, J. Souto, S. J. Taylor, {\it
Weil-Petersson translation length and manifolds with many fibered fillings.}
Adv. Math. 376 (2021), Paper No. 107457, 54 pp.



\bibitem[Ni]{Ni} Y. Ni, 
{\it Dehn surgeries on knots in product manifolds.}
J. Topol. 4 (2011), no. 4, 799-816.



\bibitem[Th1]{Th1}
W. Thurston,
{\it Three dimensional manifolds, Kleinian groups and hyperbolic geometry,}  Bull. Amer. Math. Soc. 6 (1982) 357-381.



\bibitem[Th2]{Th2} W. Thurston, 
{\it A norm for the homology of 3-manifolds.}
Mem. Amer. Math. Soc. 59 (1986), no. 339, i-vi and 99-130.





\end{thebibliography}

\end{document}